\newtheorem{definition}{Definition}[section]
\newtheorem{theorem}{Theorem}[section]
\newtheorem{lemma}{Lemma}[section]
\newtheorem{remark}{Remark}[section]
\newtheorem*{maintheorem*}{Main Theorem}
\numberwithin{equation}{section}
\newcommand{\norm}[1]{\left\| #1 \right\|}
\newcommand{\eps}{\varepsilon}
\newcommand{\eb}{{\eps,\beta}}
\newcommand{\ueb}{u_\eb}
\newcommand{\pt}{\partial_t}
\newcommand{\px}{\partial_x }
\newcommand{\pxx}{\partial_{xx}^2}
\newcommand{\pxxx}{\partial_{xxx}^3}
\newcommand{\ptxxx}{\partial_{txxx}^4}
\newcommand{\pxxxx}{\partial_{xxxx}^4}
\newcommand{\ptxxxx}{\partial_{txxxx}^5}
\newcommand{\ptx}{\partial_{tx}^2}
\newcommand{\ptxx}{\partial_{txx}^3}
\renewcommand{\i}{\ifmmode\mathit{\mathchar"7010 }\else\char"10 \fi}
\renewcommand{\j}{\ifmmode\mathit{\mathchar"7011 }\else\char"11 \fi}
\newcommand{\R}{\mathbb{R}}
\newcommand{\N}{\mathbb{N}}
\begin{document}\large

\title[A Singular limit problem of  Rosenau-KdV type]{A singular limit problem for conservation laws \\ related to the Rosenau-Korteweg-de Vries equation}
\author[G. M. Coclite and L. di Ruvo]{Giuseppe Maria Coclite and Lorenzo di Ruvo}
\address[Giuseppe Maria Coclite and Lorenzo di Ruvo]
{\newline Department of Mathematics,   University of Bari, via E. Orabona 4, 70125 Bari,   Italy}
\email[]{giuseppemaria.coclite@uniba.it, lorenzo.diruvo@uniba.it}
\urladdr{http://www.dm.uniba.it/Members/coclitegm/}

\keywords{Singular limit, compensated compactness, Rosenau-KdV-equation, entropy condition.}

\subjclass[2000]{35G25, 35L65, 35L05}


\thanks{The authors are members of the Gruppo Nazionale per l'Analisi Matematica, la Probabilit\`a e le loro Applicazioni (GNAMPA) of the Istituto Nazionale di Alta Matematica (INdAM)}

\begin{abstract}
We consider the  Rosenau-Korteweg-de Vries equation, which contains nonlinear dispersive effects. We prove
that as the diffusion parameter tends to zero, the solutions of the dispersive equation converge to discontinuous weak
solutions of the Burgers equation.
The proof relies on deriving suitable a priori estimates together with an application of the compensated compactness method in the $L^p$ setting.
\end{abstract}

\maketitle


\section{Introduction}\label{sec:intro}
Dynamics of shallow water waves that is observed along lake shores and beaches has been a research area for the past few decades in
oceanography (see \cite{AB,ZZZC}). There are several models proposed in this context: Boussinesq equation, 
Peregrine equation, regularized long wave (RLW) equation, Kawahara equation, Benjamin-Bona-Mahoney equation, Bona-Chen equation etc. 
These models are
derived from first principles under various different hypothesis and approximations. They are all well studied and very well understood.

In this context, there is also the Korteweg-de Vries equation
\begin{equation}
\label{eq:ZIU7}
\pt u +\px u^2 +\beta\pxxx u=0.
\end{equation}
Observe that, if we send $\beta\to0$ in \eqref{eq:ZIU7}, we pass from \eqref{eq:ZIU7} to the Burgers equation
\begin{equation}
\label{eq:BU}
\pt u +\px u^2 =0.
\end{equation}
In cite \cite{LN,SC}, the convergence of the solution of \eqref{eq:ZIU7} to the unique entropy solution of \eqref{eq:BU} is proven, under the assumption
\begin{equation}
\label{eq:assu-1}
u_{0}\in L^2(\R)\cap L^4(\R), \quad \beta=o\left(\eps^2\right).
\end{equation}
\cite[Appendixes $A$ and $B$]{Cd6} show that it is possible to obtain the same result of convergence, under the following assumptions
\begin{equation}
\label{eq:assu-2}
\begin{split}
& u_{0}\in L^2(\R),\quad -\infty<\int_{\R}u_{0}(x) dx<\infty,  \quad \beta=o\left(\eps^3\right),\\
& u_{0}\in L^2(\R),\quad \beta=o\left(\eps^4\right).
\end{split}
\end{equation}
One generalization of \eqref{eq:ZIU7} is the Ostrovsky equation (see \cite{O}):
\begin{equation}
\label{eq:OHbeta}
\px(\pt u+\px u^2-\beta \pxxx u)=\gamma u, \quad \beta,\gamma\in\R.
\end{equation}
\eqref{eq:OHbeta} describes small-amplitude long waves in a rotating fluid of a finite depth by the additional term induced by
the Coriolis force.
If we send $\beta\to 0$ in \eqref{eq:OHbeta}, we pass from \eqref{eq:OHbeta} to the Ostrovsky-Hunter equation (see \cite{B}).
\begin{equation}
\label{eq:OH}
\px(\pt u+\px u^2)=\gamma u,\qquad t>0, \quad x\in\R.
\end{equation}
In \cite{Cd1,CdK,dR}, the wellposedness of the entropy solutions of \eqref{eq:OH} is proven, in the sense of the following definition:
\begin{definition}
\label{def:sol}
We say that $u\in  L^{\infty}((0,T)\times\R),\,T>0,$ is an entropy solution of the initial
value problem \eqref{eq:OH} if
\begin{itemize}
\item[$i$)] $u$ is a distributional solution of \eqref{eq:OH};
\item[$ii$)] for every convex function $\eta\in  C^2(\R)$ the
entropy inequality
\begin{equation}
\label{eq:OHentropy}
\pt \eta(u)+ \px q(u)-\gamma\eta'(u) P\le 0, \qquad     q(u)=\int^u f'(\xi) \eta'(\xi)\, d\xi,
\end{equation}
holds in the sense of distributions in $(0,\infty)\times\R$.
\end{itemize}
\end{definition}
Under the assumption \eqref{eq:assu-1}, in \cite{Cd2}, the convergence of the solutions of \eqref{eq:OHbeta} to the unique entropy solution of \eqref{eq:OH} is proven.

The dynamics of dispersive shallow water waves, on the other hand, is captured with slightly different models, like the
 Rosenau-Kawahara equation and the Rosenau-KdV-RLW equation \cite{BTL,EMTYB,HXH,LB,RAB}.

The Rosenau-Korteweg-de Vries-RLW equation is following one:
\begin{equation}
\label{eq:RKV-1}
\pt u +a\px u +k\px u^{n}+b_1\pxxx u +b_2\ptxx u + c\ptxxxx u=0,\quad a,\,k,\,b_1,\,b_2,\,c\in\R.
\end{equation}
Here $u(t,x)$ is the nonlinear wave profile. The first term is the linear evolution one, while $a$ is the advection or drifting coefficient. The two dispersion 
coefficients are $b_1$ and $b_2$. The higher order dispersion coefficient is $c$, while the coefficient of nonlinearity is $k$ where $n$ is nonlinearity parameter. These are all known and given parameters.

In \cite{RAB}, the authors analyzed \eqref{eq:RKV-1}. They got solitary waves, shock waves and singular solitons along with conservation laws.

Considering the  $n=2,\, a=0,\, k=1,\, b_1=1,\, b_2=-1,\, c=1$:
\begin{equation}
\label{eq:RKV-23}
\pt u +\px u^2 +\pxxx u -\ptxx u +\ptxxxx u=0.
\end{equation}
If $n=2, \, a=0,\, k=1,\, b_1=0,\, b_2=-1,\, c=1$, \eqref{eq:RKV-1} reads
\begin{equation}
\label{eq:RKV-30}
\pt u +\px u^2 -\ptxx u +\ptxxxx u=0,
\end{equation}
which is known as Rosenau-RLW equation.

Arguing in \cite{CdREM}, we re-scale the equations as
follows
\begin{align}
\label{eq:T1}
\pt u +\px u^2 +\beta\pxxx u -\beta\ptxx u +\beta^2\ptxxxx u&=0,\\
\label{eq:T2}
\pt u +\px u^2 -\beta\ptxx u +\beta^2\ptxxxx\ueb&=0,
\end{align}
where $\beta$ is the diffusion parameter.

In \cite{Cd5}, the authors proved that the solutions of \eqref{eq:T1} and \eqref{eq:T2} converge to the unique entropy solution of \eqref{eq:BU}, under the assumptions
\begin{equation}
\label{eq:uo-l4}
u_{0}\in L^2(\R)\cap L^4(\R), \quad \beta=\mathbf{\mathcal{O}}\left(\eps^4\right).
\end{equation}

\eqref{eq:ZIU7} has also been used in very wide applications and undergone research which can be used to describe wave propagation and spread interaction (see \cite{Ba,CM,OK,ZUZO}).

In the study of the dynamics of dense discrete systems, the case of wave-wave and wave-wall interactions cannot be described using \eqref{eq:ZIU7}. To overcome
this shortcoming of \eqref{eq:ZIU7}, Rosenau proposed the following equation (see \cite{Ro1,Ro2}):
\begin{equation}
\label{eq:ROUS1}
\pt u + \px u^2  + \ptxxxx u =0,
\end{equation}
which is also obtained by \eqref{eq:RKV-1}, taking $n=2,\, a=0,\, k=1,\, b_1=0,\, b_2=0,\, c=1$.

The existence and the uniqueness of the solution for \eqref{eq:ROUS1} is proved in \cite{P}, but it is difficult to find the analytical
solution for \eqref{eq:ROUS1}. Therefore, much work has been done on the numerical methods for \eqref{eq:ROUS1} (see \cite{CH1,CHH,CHP,KL,MPC,OAAK}).

On the other hand, for the further consideration of the nonlinear wave, the viscous term $\pxxx u$ needs to be included (see \cite{Z}). In this case, \eqref{eq:ROUS1} reads
\begin{equation}
\label{eq:ROUS2}
\pt u +  + \px u^2 +\pxxx u + \ptxxxx u =0,
\end{equation}
which is known as the Rosenau-Korteweg-de Vries (KdV) equation, and is also obtianed by \eqref{eq:RKV-1}, taking  $n=2,\, a=0,\, k=1,\, b_1=1,\, b_2=0,\, c=1$.

In \cite{Z}, the author discussed the solitary wave solutions and \eqref{eq:ROUS2}. In \cite{HXH}, a conservative linear finite difference scheme for the numerical solution for an initial-boundary value problem of Rosenau-KdV equation is considered.
In \cite{E,RTB}, authors discussed the solitary solutions for \eqref{eq:ROUS2} with usual solitary ansatz method. The authors also gave the two invariants for \eqref{eq:ROUS2}. In particular, in \cite{RTB}, the authors not only studied the two types of soliton solution, one is solitary wave solution and the other is singular soliton. In \cite{ZZ}, the authors proposed an average linear finite difference scheme for the numerical solution of the initial-boundary value problem for  \eqref{eq:ROUS2}.

Consider \eqref{eq:ROUS1}. Arguing as \cite{CdREM}, we re-scale the equations as follows
\begin{equation}
\label{eq:P90}
\pt u +\px u^2 +\beta^2\ptxxxx\ueb=0.
\end{equation}
In \cite{Cd6}, the authors proved that the solutions of \eqref{eq:P90} converge to the unique entropy solution of \eqref{eq:BU}, choosing the initial datum in two different ways. The first one is:
\begin{equation}
\label{eq:uo-l2}
u_{0}\in L^2(\R), \quad \beta=o\left(\eps^4\right).
\end{equation}
The second choice is given by \eqref{eq:uo-l4}.

In this paper, we analyze \eqref{eq:ROUS2}. Arguing as \cite{CdREM}, we re-scale the equations as follows
\begin{equation}
\label{eq:RKV33}
\pt u+ \px u^2+\beta\pxxx u+\beta^2\ptxxxx u=0.
\end{equation}

We are interested in the no high frequency limit,  we send $\beta\to 0$ in \eqref{eq:RKV33}. In this way we pass from \eqref{eq:RKV33} to \eqref{eq:BU}.
We prove that, as $\beta\to0$, the solutions of \eqref{eq:RKV33} to the unique entropy solution of \eqref{eq:BU}.
In other to do this,  we can choose the initial datum and $\beta$ in two different ways.
Following \cite[Theorem $7.1$]{CRS}, the first choice is given by \eqref{eq:uo-l2} (see Theorem \ref{th:main-1}).
Since $\norm{\cdot}_{L^4}$ is a conserved quantity for \eqref{eq:RKV33},  the second  choice is given by \eqref{eq:uo-l4} (see Theorem \ref{th:main-13}).
It is interesting to observe that, while the summability on the initial datum in \eqref{eq:uo-l4} is greater than the one of \eqref{eq:uo-l2}, 
the assumption on $\beta$ in \eqref{eq:uo-l4} is weaker than the one in \eqref{eq:uo-l2}.

From the mathematical point of view, the two assumptions require two different arguments for the $L^{\infty}-$estimate (see Lemmas \ref{lm:50} and \ref{lm:562}). Indeed, the proof of Lemma \ref{lm:50}, under the assumption \eqref{eq:uo-l2}, is more technical than the one of Lemma \ref{lm:562}.
Moreover, due to the presence of the third order term, Lemmas \ref{lm:50} and \ref{lm:t3}  is finer than \cite[Lemmas $2.2$ and $3.2$]{Cd6}. Indeed, with respect to \cite[Lemma $2.2$]{Cd6}, in Lemma \ref{lm:50} we need to prove the existence of two positive constants, while, 
with respect to \cite[Lemma $3.2$]{Cd6}, in Lemma \ref{lm:t3} we need to prove the existence of four positive constants.

The paper is is organized in four sections. In Section \ref{sec:Ro1}, we prove the convergence of \eqref{eq:RKV33} to \eqref{eq:BU} in $L^{p}$ setting, with $1\le p< 2$. In Section \ref{sec:D1}, we prove the convergence of \eqref{eq:RKV33} to \eqref{eq:BU} in $L^{p}$ setting, with $1\le p< 4$. The Section \ref{appen1} is an appendix where we prove that the solutions of the the Benjamin-Bona-Mahony equation converge to discontinuous weak solutions of \eqref{eq:BU} in in $L^{p}$ setting, with $1\le p< 2$.

\section{The Rosenau-KdV-equation: $u_0\in L^2(\R)$.}\label{sec:Ro1}
In this section, we consider \eqref{eq:RKV33}, and assume \eqref{eq:uo-l2} on the initial datum.
We study the dispersion-diffusion limit for \eqref{eq:RKV33}. Therefore, we fix two small numbers
$0 < \eps,\,\beta < 1$ and consider the following fifth order approximation
\begin{equation}
\label{eq:Ro-eps-beta}
\begin{cases}
\pt\ueb+ \px \ueb^2 +\beta\pxxx\ueb +\beta^2\ptxxxx\ueb=\eps\pxx\ueb, &\qquad t>0, \ x\in\R ,\\
\ueb(0,x)=u_{\eps,\beta,0}(x), &\qquad x\in\R,
\end{cases}
\end{equation}
where $u_{\eps,\beta,0}$ is a $C^\infty$ approximation of $u_{0}$ such that
\begin{equation}
\begin{split}
\label{eq:u0eps-1}
&u_{\eps,\,\beta,\,0} \to u_{0} \quad  \textrm{in $L^{p}_{loc}(\R)$, $1\le p < 2$, as $\eps,\,\beta \to 0$,}\\
&\norm{u_{\eps,\beta, 0}}^2_{L^2(\R)}+(\beta^{\frac{1}{2}}+ \eps^2) \norm{\px u_{\eps,\beta,0}}^2_{L^2(\R)}\le C_0,\quad \eps,\beta >0,\\
&\left(\beta^2 +\beta\eps^2\right) \norm{\pxx u_{\eps,\beta,0}}^2_{L^2(\R)} +\beta^{\frac{5}{2}}\norm{\pxxx u_{\eps,\beta,0}}^2_{L^2(\R)}\le C_0,\quad \eps,\beta >0,
\end{split}
\end{equation}
and $C_0$ is a constant independent on $\eps$ and $\beta$.

The main result of this section is the following theorem.
\begin{theorem}
\label{th:main-1}
Assume that \eqref{eq:uo-l2} and  \eqref{eq:u0eps-1} hold. Fix $T>0$,
if
\begin{equation}
\label{eq:beta-eps-2}
\beta=\mathbf{\mathcal{O}}\left(\eps^4\right),
\end{equation}
then, there exist two sequences $\{\eps_{n}\}_{n\in\N}$, $\{\beta_{n}\}_{n\in\N}$, with $\eps_n, \beta_n \to 0$, and a limit function
\begin{equation*}
u\in L^{\infty}((0,T); L^2(\R)),
\end{equation*}
such that
\begin{itemize}
\item[$i)$] $u_{\eps_n, \beta_n}\to u$  strongly in $L^{p}_{loc}(\R^{+}\times\R)$, for each $1\le p <2$,
\item[$ii)$] $u$ is a distributional solution of \eqref{eq:BU}.
\end{itemize}
Moreover, if
\begin{equation}
\label{eq:beta-eps-4}
\beta=o\left(\eps^{4}\right),
\end{equation}
\begin{itemize}
\item[$iii)$] $u$ is the unique entropy solution of \eqref{eq:BU}.
\end{itemize}
\end{theorem}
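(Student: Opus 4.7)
The plan follows the classical vanishing-viscosity plus compensated-compactness strategy, adapted to the fifth-order mixed-type regularization \eqref{eq:Ro-eps-beta}. Assuming smooth global solutions $\ueb$ to \eqref{eq:Ro-eps-beta} (obtained by standard fixed-point arguments), the proof reduces to (i) uniform a priori estimates under the scaling $\beta=\mathcal{O}(\eps^4)$, (ii) $\Hneg$-compactness of entropy dissipation along the family $\{\ueb\}$, and (iii) an application of Tartar's compensated compactness theorem in the $L^p$ setting, $1\le p<2$, which is the natural range given that only an $L^2$ bound (and not $L^4$) is available.

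The basic energy identity comes from multiplying \eqref{eq:Ro-eps-beta} by $\ueb$ and integrating in $x$: the transport and the odd-order dispersive term vanish by skew-symmetry, the Rosenau term yields $\tfrac{\beta^2}{2}\tfrac{d}{dt}\norm{\pxx\ueb}_{L^2}^2$, and the viscosity contributes $\eps\norm{\px\ueb}_{L^2}^2$, so that
\[
\norm{\ueb(t,\cdot)}_{L^2}^2+\beta^2\norm{\pxx\ueb(t,\cdot)}_{L^2}^2+2\eps\int_0^t\norm{\px\ueb}_{L^2}^2\,ds\le C_0.
\]
Higher weighted derivative bounds (e.g.\ on $\beta^{1/2}\px\ueb$ and $\beta^{5/4}\pxxx\ueb$) follow by testing against $-\pxx\ueb$ and $\pxxxx\ueb$ and using \eqref{eq:u0eps-1}. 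The main obstacle, and the most technical step, is the uniform $L^\infty$ bound with only $L^2$ data: the naive route via $H^1\hookrightarrow L^\infty$ is blocked because $\int_0^T\norm{\px\ueb}_{L^2}^2\,dt$ only gives $\eps^{-1}$ control. I would instead multiply the equation by a suitable convex function of $\ueb$ (a truncation of $\ueb^3$, say) and combine the viscous and Rosenau dissipation, using $\beta=\mathcal{O}(\eps^4)$ to absorb the non-coercive contribution coming from the third-order term $\beta\pxxx\ueb$ -- this is precisely where, relative to \cite[Lemma 2.2]{Cd6}, the presence of $\beta\pxxx\ueb$ forces the tracking of two positive constants as announced in the introduction (Lemma \ref{lm:50}).

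With $L^2\cap L^\infty$ bounds in place, for each convex $\eta\in C^2(\R)$ with flux $q(u)=\int^u 2\xi\,\eta'(\xi)\,d\xi$, I would rewrite
\[
\pt\eta(\ueb)+\px q(\ueb)=\eta'(\ueb)\bigl(\eps\pxx\ueb-\beta\pxxx\ueb-\beta^2\ptxxxx\ueb\bigr)
\]
and split the right-hand side into pieces. The viscous piece becomes $\eps\px\!\bigl(\eta'(\ueb)\px\ueb\bigr)-\eps\eta''(\ueb)(\px\ueb)^2$, a space derivative of an $L^2$-bounded term (hence compact in $\Hneg$) plus a nonpositive bounded measure. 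The dispersive and Rosenau contributions, after repeated integration by parts and use of the weighted derivative estimates, factor out small powers of $\beta$ and vanish in $W^{-1,2}_{\loc}$. Murat's lemma then promotes $\Hneg_{\loc}$-precompactness, and Tartar's theorem yields a.e.\ convergence of a subsequence $u_{\eps_n,\beta_n}\to u$; combined with the $L^2$ bound and Vitali-type interpolation, this gives strong convergence in $L^p_{\loc}$ for every $1\le p<2$, from which (i) and the distributional identity (ii) follow by passage to the limit in \eqref{eq:Ro-eps-beta}.

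For (iii), under the strict scaling $\beta=o(\eps^4)$, a sharper inspection of the decomposition above shows that the dispersive and Rosenau corrections tend to zero in $\Dp$, so only the nonpositive viscous dissipation $-\eps\eta''(\ueb)(\px\ueb)^2$ survives. The limit therefore satisfies the \Kruzkov entropy inequalities for \eqref{eq:BU}; uniqueness of the \Kruzkov solution promotes the subsequential convergence to full convergence of the whole family as $\eps,\beta\to 0$.
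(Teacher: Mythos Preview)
Your high-level architecture is correct and matches the paper: basic $L^2$ energy, weighted higher-order estimates, $H^{-1}_{\loc}$-compactness of the entropy production via Murat's lemma, and then Schonbek's $L^p$ compensated compactness to conclude (i)--(ii); under the stricter scaling \eqref{eq:beta-eps-4} the dispersive and Rosenau remainders go to zero in $L^1$ rather than merely being bounded there, yielding (iii). So the skeleton is right.

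The substantive gap is in the $L^\infty$ bound, which you correctly flag as the most delicate step but then misdescribe. You propose multiplying by ``a suitable convex function of $\ueb$ (a truncation of $\ueb^3$, say)''. That is the mechanism of Section~\ref{sec:D1} (Lemma~\ref{lm:t3}), where the multiplier is $\ueb^3-A\eps^2\pxx\ueb-B\beta\eps\ptxx\ueb+C\eps\pt\ueb+E\beta^2\pxxxx\ueb$, and it \emph{requires} $u_0\in L^4(\R)$ to control the boundary contribution at $t=0$; under the present hypothesis \eqref{eq:uo-l2} you only have $u_0\in L^2(\R)$, so that route is not available. The paper's Lemma~\ref{lm:50} does not use any nonlinear function of $\ueb$: the multiplier is the \emph{linear} combination of derivatives
\[
-\beta^{1/2}\pxx\ueb \;-\; A\beta\eps\,\ptxx\ueb \;+\; B\eps\,\pt\ueb,
\]
where $A,B$ are exactly the ``two positive constants'' alluded to in the introduction. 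The nonlinear term $2\ueb\px\ueb$ is absorbed using only the basic dissipation $\eps\int_0^t\norm{\px\ueb}^2_{L^2}\,ds\le C_0$ from Lemma~\ref{lm:38}, producing an estimate of the form $\beta^{1/2}\norm{\px\ueb(t,\cdot)}^2_{L^2}\le C_0\bigl(1+\norm{\ueb}^2_{L^\infty((0,T)\times\R)}\bigr)$; one then closes by the one-dimensional interpolation $\ueb^2(t,x)\le 2\norm{\ueb(t,\cdot)}_{L^2}\norm{\px\ueb(t,\cdot)}_{L^2}$, yielding the quartic inequality $\norm{\ueb}^4_{L^\infty}\le C_0\beta^{-1/2}\bigl(1+\norm{\ueb}^2_{L^\infty}\bigr)$ and hence $\norm{\ueb}_{L^\infty}\le C_0\beta^{-1/4}$. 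In other words, the paper \emph{does} go through $H^1\hookrightarrow L^\infty$, precisely via a bootstrap that circumvents the obstruction you noted; and the resulting bound is \emph{not} uniform in $\eps,\beta$. The subsequent compensated-compactness argument works because only \emph{compactly supported} entropy--entropy flux pairs are used, so that $\eta',\eta''\in L^\infty$ and the blow-up $\beta^{-1/4}$ never enters the $I_{j,\eps,\beta}$ estimates. Your phrase ``With $L^2\cap L^\infty$ bounds in place'' should be read accordingly: there is no uniform $L^\infty$ bound here, and none is needed.
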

Let us prove some a priori estimates on $\ueb$, denoting with $C_0$ the constants which depend only on the initial data.
\begin{lemma}\label{lm:38}
For each $t>0$,
\begin{equation}
\label{eq:l-2-u1}
\norm{\ueb(t,\cdot)}^2_{L^2(\R)}+\beta^2\norm{\pxx\ueb(t,\cdot)}^2_{L^2(\R)} + 2\eps\int_{0}^{t}\norm{\px\ueb(t,\cdot)}^2_{L^2(\R)}\le C_0.
\end{equation}
\end{lemma}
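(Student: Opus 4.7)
The plan is to derive \eqref{eq:l-2-u1} through a standard $L^2$ energy estimate: I would multiply the PDE in \eqref{eq:Ro-eps-beta} by $\ueb$ and integrate in $x$ over $\R$, then examine each of the five resulting terms via integration by parts. The smoothness and decay built into the approximation $u_{\eps,\beta,0}$ will guarantee that all boundary contributions at $\pm\infty$ vanish, justifying the manipulations.

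Computing term by term: the temporal term gives $\int_{\R}\ueb\pt\ueb\dx = \tfrac{1}{2}\tfrac{d}{dt}\norm{\ueb}^2_{L^2(\R)}$; the nonlinear flux produces $\int_{\R}\ueb\px\ueb^2\dx = \tfrac{2}{3}\int_{\R}\px\ueb^3\dx = 0$; the Korteweg--de Vries dispersion yields $\beta\int_{\R}\ueb\pxxx\ueb\dx = -\tfrac{\beta}{2}\int_{\R}\px(\px\ueb)^2\dx = 0$ after two integrations by parts; the Rosenau term becomes $\beta^2\int_{\R}\ueb\ptxxxx\ueb\dx = \beta^2\int_{\R}\pxx\ueb\,\pt\pxx\ueb\dx = \tfrac{\beta^2}{2}\tfrac{d}{dt}\norm{\pxx\ueb}^2_{L^2(\R)}$ after integrating twice in $x$ and commuting $\pt$ with the spatial derivatives; and the viscosity supplies dissipation via $\eps\int_{\R}\ueb\pxx\ueb\dx = -\eps\norm{\px\ueb}^2_{L^2(\R)}$.

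Summing these five contributions produces the energy identity
\begin{equation*}
\frac{d}{dt}\left(\norm{\ueb(t,\cdot)}^2_{L^2(\R)} + \beta^2\norm{\pxx\ueb(t,\cdot)}^2_{L^2(\R)}\right) + 2\eps\norm{\px\ueb(t,\cdot)}^2_{L^2(\R)} = 0.
\end{equation*}
Integrating this from $0$ to $t$ and invoking the initial-data bounds \eqref{eq:u0eps-1}, which include both $\norm{u_{\eps,\beta,0}}^2_{L^2(\R)}\le C_0$ and $\beta^2\norm{\pxx u_{\eps,\beta,0}}^2_{L^2(\R)}\le C_0$, yields \eqref{eq:l-2-u1} (after absorbing a factor $2$ into $C_0$).

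I do not anticipate any real obstacle: the statement is the conservation-type identity one expects for this PDE, with the viscosity furnishing the dissipation of $\px\ueb$. The only point of interest is that the third-order dispersion $\beta\pxxx\ueb$ contributes nothing at the $L^2$ level, a reflection of its skew-symmetric character; its influence will surface only in the higher-order estimates that follow. Consequently, Lemma \ref{lm:38} should serve as the foundation on which the subsequent, more delicate bounds - and in particular the $L^\infty$ estimate of Lemma \ref{lm:50}, which the authors single out as the most technical step of this section - will be built.
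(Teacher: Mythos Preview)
Your proposal is correct and follows essentially the same approach as the paper: multiply \eqref{eq:Ro-eps-beta} by $\ueb$, observe that the third-order dispersion contributes nothing (the paper singles out $\int_{\R}\ueb\pxxx\ueb\,dx=0$ explicitly), and then reduce to the $L^2$ energy identity for the Rosenau equation, which the paper handles by citing \cite[Lemma~2.1]{Cd6}. Your write-up simply spells out the computation that the paper delegates to that reference.
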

\begin{proof}
We begin by observing that
\begin{equation*}
\int_{\R}\ueb\pxxx\ueb dx =0.
\end{equation*}
Therefore, arguing as \cite[Lemma $2.1$]{Cd6}, we have \eqref{eq:l-2-u1}.
\end{proof}
\begin{lemma}\label{lm:50}
Fix $T>0$. Assume \eqref{eq:beta-eps-2} holds. There exists $C_0>0$, independent on $\eps,\,\beta$ such that
\begin{equation}
\label{eq:u-infty-3}
\norm{\ueb}_{L^{\infty}((0,T)\times\R)}\le C_0\beta^{-\frac{1}{4}}.
\end{equation}
Moreover,
\begin{itemize}
\item[$i)$] the families  $\{\beta^{\frac{1}{2}}\px\ueb\}_{\eps,\,\beta},\,\{\beta^{\frac{1}{4}}\eps\px\ueb\}_{\eps,\,\beta},\,\{\beta^{\frac{3}{4}}\eps\pxx\ueb\}_{\eps,\,\beta}, \{\beta^{\frac{3}{2}}\pxxx\ueb\}_{\eps,\,\beta},$\\ are bounded in $L^{\infty}((0,T);L^{2}(\R))$;
\item[$ii)$] the families $\{\beta^{\frac{3}{4}}\eps^{\frac{1}{2}}\ptx\ueb\}_{\eps,\,\beta},$ $\{\beta^{\frac{7}{4}}\eps^{\frac{1}{2}}\ptxxx\ueb\}_{\eps,\,\beta},$
$\{\beta^{\frac{1}{4}}\eps\pt\ueb\}_{\eps,\,\beta},$ \\ $\{\beta^{\frac{5}{4}}\eps^{\frac{1}{2}}\ptxx\ueb\}_{\eps,\,\beta}$,
$\{\beta^{\frac{1}{2}}\eps^{\frac{1}{2}}\pxx\ueb\}_{\eps,\,\beta}$ are bounded in $L^2((0,T)\times\R)$.
\end{itemize}
\end{lemma}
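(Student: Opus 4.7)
The plan is to proceed in three steps.

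\medskip
\noindent\textbf{Step 1.} I would derive \eqref{eq:u-infty-3} and the $L^\infty_t L^2_x$-bound on $\{\beta^{1/2}\partial_x u_{\eps,\beta}\}$ directly from Lemma~\ref{lm:38}. The one-dimensional interpolation $\|\partial_x v\|_{L^2}^2 \le \|v\|_{L^2}\,\|\partial_{xx} v\|_{L^2}$, combined with the uniform estimate $\|u_{\eps,\beta}\|_{L^2}^2+\beta^2\|\partial_{xx} u_{\eps,\beta}\|_{L^2}^2\le C_0$ from that lemma, immediately yields $\beta^{1/2}\|\partial_x u_{\eps,\beta}(t,\cdot)\|_{L^2}\le C_0^{1/2}$. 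Inserting this in the Gagliardo--Nirenberg inequality $\|v\|_{L^\infty}^2\le 2\|v\|_{L^2}\|\partial_x v\|_{L^2}$ produces \eqref{eq:u-infty-3}.

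\medskip
\noindent\textbf{Step 2.} For the remaining three $L^\infty_t L^2_x$-families and the $L^2_{t,x}$-family on $\beta^{1/2}\eps^{1/2}\partial_{xx} u_{\eps,\beta}$, I would multiply \eqref{eq:Ro-eps-beta} by $-\partial_{xx} u_{\eps,\beta}$ and integrate in $x$. The dispersive cross-term drops since $\int u_{xx} u_{xxx}\,dx = 0$, the Rosenau term yields $\int u_{xx}\partial_{txxxx}u_{\eps,\beta}\,dx = -\frac{1}{2}\frac{d}{dt}\|\partial_{xxx} u_{\eps,\beta}\|_{L^2}^2$, and the nonlinearity rearranges as $\int u_{xx}\partial_x(u_{\eps,\beta}^2)\,dx = -\int(\partial_x u_{\eps,\beta})^3\,dx$, so one arrives at
\begin{equation*}
\frac{1}{2}\frac{d}{dt}\bigl[\|\partial_x u_{\eps,\beta}\|_{L^2}^2+\beta^2\|\partial_{xxx} u_{\eps,\beta}\|_{L^2}^2\bigr]+\eps\|\partial_{xx} u_{\eps,\beta}\|_{L^2}^2 = -\int_\R(\partial_x u_{\eps,\beta})^3\,dx.
\end{equation*}
Rewriting the cubic right-hand side as $2\int u_{\eps,\beta}\,\partial_x u_{\eps,\beta}\,\partial_{xx} u_{\eps,\beta}\,dx$, estimating it by H\"older together with the $L^\infty$-bound of Step~1, and absorbing a fraction into $\eps\|\partial_{xx} u_{\eps,\beta}\|_{L^2}^2$ via Young's inequality produces a Gronwall-type differential inequality. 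The assumption $\beta=\mathcal{O}(\eps^4)$ is precisely what keeps the integrating factor uniformly bounded in $(\eps,\beta)$. Together with the initial-data bounds in \eqref{eq:u0eps-1}, this closes all four announced bounds.

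\medskip
\noindent\textbf{Step 3.} For the remaining $L^2_{t,x}$-bounds on $\partial_t u_{\eps,\beta}$, $\partial_{tx} u_{\eps,\beta}$, $\partial_{txx} u_{\eps,\beta}$ and $\partial_{txxx} u_{\eps,\beta}$, I would rewrite \eqref{eq:Ro-eps-beta} in the form
\begin{equation*}
(I+\beta^2\partial_x^4)\partial_t u_{\eps,\beta}=\eps\partial_{xx} u_{\eps,\beta}-\partial_x u_{\eps,\beta}^2-\beta\partial_{xxx} u_{\eps,\beta},
\end{equation*}
test against $\partial_x^{2k}\partial_t u_{\eps,\beta}$ for $k=0,1,2,3$, and exploit the positivity of the operator $I+\beta^2\partial_x^4$. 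Since the Fourier symbol $1+\beta^2\xi^4$ dominates each of $1$, $\beta^{1/2}\xi^2$, $\beta\xi^4$ in the relevant regime, each of the announced $L^2_{t,x}$-weights can be read off from the spatial bounds already proved in Steps~1--2 (using Plancherel and the estimates on the right-hand side).

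\medskip
\noindent\textbf{Main obstacle.} The delicate point is the cubic $\int(\partial_x u_{\eps,\beta})^3\,dx$ in Step~2: this term is supercritical for the natural $H^1$-energy, and a direct Gronwall argument would produce exponential factors in a negative power of $\eps$ and $\beta$ that blow up as $\eps,\beta\to0$. The rescue is the carefully tuned combination of the $L^\infty$-bound from Step~1, the dissipation $\eps\|\partial_{xx} u_{\eps,\beta}\|_{L^2}^2$, and the scaling constraint $\beta=\mathcal{O}(\eps^4)$; this interplay is the origin of the ``two positive constants'' alluded to in the introduction, and it is what makes this lemma finer than \cite[Lemma~2.2]{Cd6}.
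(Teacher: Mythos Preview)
Your Step~1 is correct and in fact cleaner than the paper's route: the paper obtains \eqref{eq:u-infty-3} only after setting up the full combined energy and closing a bootstrap in $\|u_{\eps,\beta}\|_{L^\infty}$, whereas you observe that it follows immediately from Lemma~\ref{lm:38} via the interpolation $\|\partial_x v\|_{L^2}^2\le\|v\|_{L^2}\|\partial_{xx}v\|_{L^2}$.

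Your Step~2 is essentially right, but the mechanism you name is wrong. The differential inequality you obtain is
\[
\frac{d}{dt}\bigl[\|\partial_x u_{\eps,\beta}\|_{L^2}^2+\beta^2\|\partial_{xxx}u_{\eps,\beta}\|_{L^2}^2\bigr]+\frac{\eps}{2}\|\partial_{xx}u_{\eps,\beta}\|_{L^2}^2
\le \frac{C_0}{\eps\beta^{1/2}}\|\partial_x u_{\eps,\beta}\|_{L^2}^2,
\]
and the Gronwall factor $\exp(C_0 t/(\eps\beta^{1/2}))$ is \emph{not} uniformly bounded under $\beta=\mathcal{O}(\eps^4)$. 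What actually closes the estimate is to integrate in $t$ and invoke the dissipation $\eps\int_0^t\|\partial_x u_{\eps,\beta}\|_{L^2}^2\,ds\le C_0$ from Lemma~\ref{lm:38}, yielding a bound of size $C_0\eps^{-2}\beta^{-1/2}$, which then translates into the claimed weighted estimates. This is also how the paper proceeds.

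Step~3 has a genuine gap. The right-hand side $F=\eps\partial_{xx}u_{\eps,\beta}-\partial_x u_{\eps,\beta}^2-\beta\partial_{xxx}u_{\eps,\beta}$ contains the KdV term $\beta\partial_{xxx}u_{\eps,\beta}$, for which Steps~1--2 give only an $L^\infty_tL^2_x$ bound (of size $C_0\eps^{-1}\beta^{-5/4}$), not an $L^2_t$ one. Your Fourier argument then produces $\beta^{3/2}\eps^2\int\|\partial_{tx}u_{\eps,\beta}\|^2\le C_0$, $\beta^{5/2}\eps^2\int\|\partial_{txx}u_{\eps,\beta}\|^2\le C_0$, $\beta^{7/2}\eps^2\int\|\partial_{txxx}u_{\eps,\beta}\|^2\le C_0$, each too weak by a factor $\eps$ for the lemma. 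This matters downstream: for instance the control of $I_{6,\eps,\beta}=\beta^2\eta''\partial_x u_{\eps,\beta}\,\partial_{txxx}u_{\eps,\beta}$ in Lemma~\ref{lm:259} fails with your weaker bound. The paper avoids this by testing against the single combined multiplier $-\beta^{1/2}\partial_{xx}u_{\eps,\beta}-A\beta\eps\,\partial_{txx}u_{\eps,\beta}+B\eps\,\partial_t u_{\eps,\beta}$: the cross terms generated by $\beta\partial_{xxx}u_{\eps,\beta}$ are then absorbed into the good terms $B\beta^2\eps\|\partial_{txx}u_{\eps,\beta}\|^2$ and $A\beta^3\eps\|\partial_{txxx}u_{\eps,\beta}\|^2$ coming from the Rosenau operator, and into the dissipation $\beta^{1/2}\eps\|\partial_{xx}u_{\eps,\beta}\|^2$, provided $A,B<1$. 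These interlocking absorptions are precisely the ``two positive constants'' the introduction refers to, and they are not available if the spatial and time-derivative estimates are decoupled as you propose.
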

\begin{proof}
Let $0<t<T$. Let $A,\,B$ be some positive constants which will be specified later.  Multiplying \eqref{eq:Ro-eps-beta} by $-\beta^{\frac{1}{2}}\pxx\ueb -A\beta\eps \ptxx\ueb +B\eps \pt\ueb$, we have
\begin{equation}
\label{eq:p456}
\begin{split}
&\left(-\beta^{\frac{1}{2}}\pxx\ueb -A\beta\eps \ptxx\ueb +B\eps \pt\ueb\right)\pt\ueb\\
&\qquad\quad +2\left(-\beta^{\frac{1}{2}}\pxx\ueb -A\beta\eps \ptxx\ueb+B\eps \pt\ueb\right)\ueb\px\ueb\\
&\qquad\quad\beta \left(-\beta^{\frac{1}{2}}\pxx\ueb -A\beta\eps \ptxx\ueb+ B\eps\pt\ueb\right)\pxxx\ueb\\
&\qquad\quad +\beta^2\left(-\beta^{\frac{1}{2}}\pxx\ueb - A\beta\eps \ptxx\ueb +B\eps \pt\ueb\right)\ptxxxx\ueb\\
&\qquad=\eps\left(-\beta^{\frac{1}{2}}\pxx\ueb -A\beta\eps \ptxx\ueb +B\eps \pt\ueb\right)\pxx\ueb.
\end{split}
\end{equation}
We observe that
\begin{equation}
\label{eq:L1}
\begin{split}
&\int_{\R}\left(-\beta^{\frac{1}{2}}\pxx\ueb -A\beta\eps \ptxx\ueb +B\eps \pt\ueb\right)\pt\ueb dx\\
&\qquad= \frac{\beta^{\frac{1}{2}}}{2}\frac{d}{dt}\norm{\px\ueb(t,\cdot)}^2_{L^2(\R)}+ \beta\eps A\norm{\ptx\ueb(t,\cdot)}^2_{L^2(\R)}\\
&\qquad\quad +B\eps\norm{\pt\ueb(t,\cdot)}^2_{L^2(\R)}.
\end{split}
\end{equation}
Since
\begin{align*}
2\int_{\R}&\left(-\beta^{\frac{1}{2}}\pxx\ueb -A\beta\eps \ptxx\ueb +B\eps \pt\ueb\right)\ueb\px\ueb dx\\
=& -2\beta^{\frac{1}{2}}\int_{\R}\ueb\px\ueb\pxx\ueb dx -2A\beta\eps\int_{\R}\ueb\px\ueb\ptxx\ueb dx \\
&+2B\eps\int_{\R}\ueb\px\ueb\pt\ueb dx,\\
\beta \int_{\R}&\left(-\beta^{\frac{1}{2}}\pxx\ueb -A\beta\eps \ptxx\ueb+ B\eps\pt\ueb\right)\pxxx\ueb dx \\
=& A\beta^2\eps\int_{\R}\pxx\ueb\ptxxx\ueb dx + B\beta\eps \int_{\R} \px\ueb\ptxx\ueb dx,\\
\beta^2\int_{\R}&\left(-\beta^{\frac{1}{2}}\pxx\ueb - A\beta\eps \ptxx\ueb +B\eps \pt\ueb\right)\ptxxxx\ueb dx\\
=& \frac{\beta^{\frac{5}{2}}}{2}\frac{d}{dt}\norm{\pxxx\ueb(t,\cdot)}^2_{L^2(\R)} + A\beta^3\eps\norm{\ptxxx\ueb(t,\cdot)}^2_{L^2(\R)}\\
&+B\beta^2\eps\norm{\ptxx\ueb(t,\cdot)}^2_{L^2(\R)},\\
\eps\int_{\R}&\left(-\beta^{\frac{1}{2}}\pxx\ueb -A\beta\eps \ptxx\ueb +B\eps \pt\ueb\right)\pxx\ueb dx\\
=& -\beta^{\frac{1}{2}}\eps\norm{\pxx\ueb(t,\cdot)}^2_{L^2(\R)} -\frac{A\beta\eps^2}{2}\frac{d}{dt}\norm{\pxx\ueb(t,\cdot)}^2_{L^2(\R)}\\
&\qquad\quad -\frac{B\eps^2}{2}\frac{d}{dt}\norm{\px\ueb(t,\cdot)}^2_{L^2(\R)},
\end{align*}
an integration on $\R$ of \eqref{eq:L1} gives
\begin{equation}
\label{eq:L6}
\begin{split}
&\frac{d}{dt}\left(\frac{\beta^{\frac{1}{2}} + B\eps^2}{2}\norm{\px\ueb(t,\cdot)}^2_{L^2(\R)}+ \frac{A\beta\eps^2}{2} \norm{\pxx\ueb(t,\cdot)}^2_{L^2(\R)} \right)\\
&\qquad\quad + \frac{\beta^{\frac{5}{2}}}{2}\frac{d}{dt}\norm{\pxxx\ueb(t,\cdot)}^2_{L^2(\R)} + \beta\eps A\norm{\ptx\ueb(t,\cdot)}^2_{L^2(\R)}\\
&\qquad\quad +B\eps\norm{\pt\ueb(t,\cdot)}^2_{L^2(\R)} + A\beta^3\eps\norm{\ptxxx\ueb(t,\cdot)}^2_{L^2(\R)}\\
&\qquad\quad +B\beta^2\eps\norm{\ptxx\ueb(t,\cdot)}^2_{L^2(\R)}+ \beta^{\frac{1}{2}}\eps\norm{\pxx\ueb(t,\cdot)}^2_{L^2(\R)}\\
&\qquad= 2\beta^{\frac{1}{2}}\int_{\R}\ueb\px\ueb\pxx\ueb dx +2A\beta\eps\int_{\R}\ueb\px\ueb\ptxx\ueb dx \\
&\qquad\quad -2B\eps\int_{\R}\ueb\px\ueb\pt\ueb dx - A\beta^2\eps\int_{\R}\pxx\ueb\ptxxx\ueb dx\\
&\qquad\quad - B\beta\eps \int_{\R} \px\ueb\ptxx\ueb dx.
\end{split}
\end{equation}
Using \eqref{eq:u0eps-1}, $0<\beta<1$, and the Young inequality,
\begin{align*}
2\beta^{\frac{1}{2}}&\int_{\R}\vert\ueb\px\ueb\vert\vert\pxx\ueb\vert dx= \beta^{\frac{1}{2}}\int_{\R}
\left\vert\frac{2\ueb\px\ueb}{\eps^{\frac{1}{2}}}\right\vert\left\vert \eps^{\frac{1}{2}}\pxx\ueb(t,\cdot)\right\vert dx \\
\le& \frac{2\beta^{\frac{1}{2}}}{\eps}\int_{\R}\ueb^2(\px\ueb)^2 dx +\frac{\beta^{\frac{1}{2}}\eps}{2}\norm{\pxx\ueb(t,\cdot)}^2_{L^2(\R)}\\
\le& C_{0}\eps\norm{\ueb}^2_{L^2((0,T)\times\R)}\norm{\px\ueb(t,\cdot)}^2_{L^2(\R)} +\frac{\beta^{\frac{1}{2}}\eps}{2}\norm{\pxx\ueb(t,\cdot)}^2_{L^2(\R)},\\
2A\beta\eps\int_{\R}&\vert\ueb\px\ueb\vert\vert\ptxx\ueb\vert dx= 
\eps \int_{\R}\left\vert \frac{2A\ueb\px\ueb}{\sqrt{B}}\right\vert\left\vert\sqrt{B}\beta\ptxx\ueb\right\vert dx\\
\le& \frac{2A^2\eps}{B}\int_{\R}\ueb^2(\px\ueb)^2 dx + \frac{B\beta^2\eps}{2}\norm{\ptxx\ueb(t,\cdot)}^2_{L^2(\R)}\\
\le&\frac{2A^2\eps}{B}\norm{\ueb}^2_{L^2((0,T)\times\R)}\norm{\px\ueb(t,\cdot)}^2_{L^2(\R)} + \frac{B\beta^2\eps}{2}\norm{\ptxx\ueb(t,\cdot)}^2_{L^2(\R)},\\
2B\eps\int_{\R}&\vert\ueb\px\ueb\vert\vert\pt\ueb\vert dx = B\eps\int_{\R} \left\vert 2\ueb\px\ueb\right\vert \left\vert \pt\ueb\right\vert dx \\
\le& 2B\eps\int_{\R}\ueb^2(\px\ueb)^2 dx +\frac{B\eps}{2}\norm{\pt\ueb(t,\cdot)}^2_{L^2(\R)}\\
\le& 2B\eps\norm{\ueb}^2_{L^2((0,T)\times\R)}\norm{\px\ueb(t,\cdot)}^2_{L^2(\R)}+\frac{B\eps}{2}\norm{\pt\ueb(t,\cdot)}^2_{L^2(\R)},\\
A\beta^2\eps\int_{\R}&\vert\pxx\ueb\vert\vert\ptxxx\ueb\vert dx 
= A\eps\int_{\R}\left\vert\beta^{\frac{1}{2}}\pxx\ueb\right\vert\left\vert\beta^{\frac{3}{2}}\ptxxx\ueb \right\vert dx \\
 \le& \frac{A\beta\eps}{2}\norm{\pxx\ueb(t,\cdot)}^2_{L^2(\R)} + \frac{A\beta^3\eps}{2}\norm{\ptxxx\ueb(t,\cdot)}^2_{L^2(\R)}\\
\le& \frac{A\beta^{\frac{1}{2}}\eps}{2}\norm{\pxx\ueb(t,\cdot)}^2_{L^2(\R)} + \frac{A\beta^3\eps}{2}\norm{\ptxxx\ueb(t,\cdot)}^2_{L^2(\R)},\\
B\beta\eps \int_{\R}& \px\ueb\ptxx\ueb dx=\eps\int_{\R}\left\vert\px\ueb\right \vert\left\vert B\beta\ptxx\ueb\right\vert dx\\
\le& \frac{\eps}{2}\norm{\px\ueb(t,\cdot)}^2_{L^2(\R)} + \frac{B^2\beta^2\eps}{2}\norm{\ptxx\ueb(t,\cdot)}^2_{L^2(\R)}.
\end{align*}
Therefore, \eqref{eq:L6} gives
\begin{equation}
\label{eq:L9}
\begin{split}
&\frac{d}{dt}\left(\frac{\beta^{\frac{1}{2}} + B\eps^2}{2}\norm{\px\ueb(t,\cdot)}^2_{L^2(\R)}+ \frac{A\beta\eps^2}{2} \norm{\pxx\ueb(t,\cdot)}^2_{L^2(\R)} \right)\\
&\qquad\quad + \frac{\beta^{\frac{5}{2}}}{2}\frac{d}{dt}\norm{\pxxx\ueb(t,\cdot)}^2_{L^2(\R)} + \beta\eps A\norm{\ptx\ueb(t,\cdot)}^2_{L^2(\R)}\\
&\qquad\quad +\frac{B\eps}{2}\norm{\pt\ueb(t,\cdot)}^2_{L^2(\R)} + \frac{A\beta^3\eps}{2}\norm{\ptxxx\ueb(t,\cdot)}^2_{L^2(\R)}\\
&\qquad\quad +\frac{B}{2}\beta^2\eps\left(1-B\right)\norm{\ptxx\ueb(t,\cdot)}^2_{L^2(\R)}+ \frac{\beta^{\frac{1}{2}}\eps}{2}\left(1-A\right)\norm{\pxx\ueb(t,\cdot)}^2_{L^2(\R)}\\
&\qquad \le  C_{0}\eps\norm{\ueb}^2_{L^2((0,T)\times\R)}\norm{\px\ueb(t,\cdot)}^2_{L^2(\R)}+\frac{\eps}{2}\norm{\px\ueb(t,\cdot)}^2_{L^2(\R)}\\
&\qquad\quad+\frac{2A^2\eps}{B}\norm{\ueb}^2_{L^2((0,T)\times\R)}\norm{\px\ueb(t,\cdot)}^2_{L^2(\R)}\\
&\qquad\quad + 2B\eps\norm{\ueb}^2_{L^2((0,T)\times\R)}\norm{\px\ueb(t,\cdot)}^2_{L^2(\R)}.
\end{split}
\end{equation}
Choosing $\displaystyle A=\frac{1}{2},\,B=\frac{1}{2}$, from \eqref{eq:L9}, we have
\begin{align*}
&\frac{d}{dt}\left(\frac{2\beta^{\frac{1}{2}} + \eps^2}{4}\norm{\px\ueb(t,\cdot)}^2_{L^2(\R)}+ \frac{\beta\eps^2}{4} \norm{\pxx\ueb(t,\cdot)}^2_{L^2(\R)} \right)\\
&\qquad\quad + \frac{\beta^{\frac{5}{2}}}{2}\frac{d}{dt}\norm{\pxxx\ueb(t,\cdot)}^2_{L^2(\R)} + \frac{\beta\eps}{2} \norm{\ptx\ueb(t,\cdot)}^2_{L^2(\R)}\\
&\qquad\quad  +\frac{\eps}{4}\norm{\pt\ueb(t,\cdot)}^2_{L^2(\R)} + \frac{\beta^3\eps}{4}\norm{\ptxxx\ueb(t,\cdot)}^2_{L^2(\R)}\\
&\qquad\quad  +\frac{\beta^2\eps}{8}\norm{\ptxx\ueb(t,\cdot)}^2_{L^2(\R)}+ \frac{\beta^{\frac{1}{2}}\eps}{4}\norm{\pxx\ueb(t,\cdot)}^2_{L^2(\R)}\\
&\qquad \le  C_{0}\eps\norm{\ueb}^2_{L^2((0,T)\times\R)}\norm{\px\ueb(t,\cdot)}^2_{L^2(\R)}+\frac{\eps}{2}\norm{\px\ueb(t,\cdot)}^2_{L^2(\R)}.
\end{align*}
\eqref{eq:u0eps-1}, \eqref{eq:l-2-u1}, and an integration on $(0,t)$ give
\begin{equation}
\label{eq:L10}
\begin{split}
&\frac{2\beta^{\frac{1}{2}} + \eps^2}{4}\norm{\px\ueb(t,\cdot)}^2_{L^2(\R)}+ \frac{\beta\eps^2}{4} \norm{\pxx\ueb(t,\cdot)}^2_{L^2(\R)}\\
&\qquad\quad +\frac{\beta^{\frac{5}{2}}}{2}\norm{\pxxx\ueb(t,\cdot)}^2_{L^2(\R)} + \frac{\beta\eps}{2} \int_{0}^{t}\norm{\ptx\ueb(s,\cdot)}^2_{L^2(\R)}ds\\
&\qquad\quad +\frac{\eps}{4}\int_{0}^{t}\norm{\pt\ueb(s,\cdot)}^2_{L^2(\R)}ds + \frac{\beta^3\eps}{4}\int_0^t\norm{\ptxxx\ueb(s,\cdot)}^2_{L^2(\R)}ds\\
&\qquad\quad+\frac{\beta^2\eps}{8}\int_{0}^{t}\norm{\ptxx\ueb(s,\cdot)}^2_{L^2(\R)}ds+ \frac{\beta^{\frac{1}{2}}\eps}{4}\int_{0}^{t}
\norm{\pxx\ueb(s,\cdot)}^2_{L^2(\R)}ds\\
&\qquad \le C_{0} + C_{0}\eps\norm{\ueb}^2_{L^2((0,T)\times\R)}\int_{0}^{t}\norm{\px\ueb(s,\cdot)}^2_{L^2(\R)}ds\\
&\qquad\quad+\frac{\eps}{2}\int_{0}^{t}\norm{\px\ueb(s,\cdot)}^2_{L^2(\R)}ds \le C_{0}\left(1+\norm{\ueb}^2_{L^2((0,T)\times\R)}\right).
\end{split}
\end{equation}
We prove \eqref{eq:u-infty-3}. Due to \eqref{eq:l-2-u1}, \eqref{eq:L10}, and the H\"older inequality,
\begin{align*}
\ueb^2(t,x) =& 2\int_{-\infty}^{x}\ueb\px\ueb dx \le 2\int_{\R}\vert\ueb\px\ueb\vert dx \\
\le & \norm{\ueb(t,\cdot)}_{L^2(\R)}\norm{\px\ueb(t,\cdot)}_{L^2(\R)}\\
\le& \frac{C_{0}}{\beta^{\frac{1}{4}}}\sqrt{\left(1+\norm{\ueb}^2_{L^2((0,T)\times\R)}\right)},
\end{align*}
that is
\begin{equation*}
\norm{\ueb}^4_{L^2((0,T)\times\R)}\frac{C_0}{\beta^{\frac{1}{2}}}\left(1+\norm{\ueb}^2_{L^2((0,T)\times\R)}\right).
\end{equation*}
Arguing as \cite[Lemma $2.2$]{Cd6}, we have \eqref{eq:u-infty-3}.

It follows from \eqref{eq:u-infty-3} and \eqref{eq:L10} that
\begin{align*}
&\frac{2\beta^{\frac{1}{2}} + \eps^2}{4}\norm{\px\ueb(t,\cdot)}^2_{L^2(\R)}+ \frac{\beta\eps^2}{4} \norm{\pxx\ueb(t,\cdot)}^2_{L^2(\R)}\\
&\qquad\quad +\frac{\beta^{\frac{5}{2}}}{2}\norm{\pxxx\ueb(t,\cdot)}^2_{L^2(\R)} + \frac{\beta\eps}{2} \int_{0}^{t}\norm{\ptx\ueb(s,\cdot)}^2_{L^2(\R)}ds\\
&\qquad\quad +\frac{\eps}{4}\int_{0}^{t}\norm{\pt\ueb(s,\cdot)}^2_{L^2(\R)}ds + \frac{\beta^3\eps}{4}\int_{\R}\norm{\ptxxx\ueb(s,\cdot)}^2_{L^2(\R)}ds\\
&\qquad\quad+\frac{\beta^2\eps}{8}\int_{0}^{t}\norm{\ptxx\ueb(s,\cdot)}^2_{L^2(\R)}ds+ \frac{\beta^{\frac{1}{2}}\eps}{4}\int_{0}^{t}\norm{\pxx\ueb(t,\cdot)}^2_{L^2(\R)}ds \le C_{0}\beta^{-\frac{1}{2}},
\end{align*}
that is,
\begin{align*}
&\frac{2\beta + \beta^{\frac{1}{2}}\eps^2}{4}\norm{\px\ueb(t,\cdot)}^2_{L^2(\R)}+ \frac{\beta^{\frac{3}{2}}\eps^2}{4} \norm{\pxx\ueb(t,\cdot)}^2_{L^2(\R)}\\
&\qquad\quad +\frac{\beta^3}{2}\norm{\pxxx\ueb(t,\cdot)}^2_{L^2(\R)} + \frac{\beta^{\frac{3}{2}}\eps}{2} \int_{0}^{t}\norm{\ptx\ueb(s,\cdot)}^2_{L^2(\R)}ds\\
&\qquad\quad +\frac{\beta^{\frac{1}{2}}\eps}{4}\int_{0}^{t}\norm{\pt\ueb(s,\cdot)}^2_{L^2(\R)}ds + \frac{\beta^{\frac{7}{2}}\eps}{4}\int_{\R}\norm{\ptxxx\ueb(s,\cdot)}^2_{L^2(\R)}ds\\
&\qquad\quad+\frac{\beta^{\frac{5}{2}}\eps}{8}\int_{0}^{t}\norm{\ptxx\ueb(s,\cdot)}^2_{L^2(\R)}ds+ \frac{\beta\eps}{4}\int_{0}^{t}\norm{\pxx\ueb(t,\cdot)}^2_{L^2(\R)}ds \le C_{0}.
\end{align*}
Hence,
\begin{align*}
\beta^{\frac{1}{2}}\norm{\px\ueb(t,\cdot)}_{L^2(\R)}\le &C_{0},\\
\beta^{\frac{1}{4}}\eps\norm{\px\ueb(t,\cdot)}_{L^2(\R)}\le &C_{0},\\
\beta^{\frac{3}{4}}\eps\norm{\pxx\ueb(t,\cdot)}_{L^2(\R)}\le &C_{0},\\
\beta^{\frac{3}{2}}\norm{\pxxx\ueb(t,\cdot)}_{L^2(\R)}\le &C_{0},\\
\beta^{\frac{3}{2}}\eps\int_{0}^{t}\norm{\ptx\ueb(s,\cdot)}^2_{L^2(\R)}ds\le&C_{0},\\
\beta^{\frac{1}{2}}\eps\int_{0}^{t}\norm{\pt\ueb(s,\cdot)}^2_{L^2(\R)}ds\le&C_{0},\\
\beta^{\frac{7}{2}}\eps\int_{\R}\norm{\ptxxx\ueb(s,\cdot)}^2_{L^2(\R)}ds\le&C_{0},\\
\beta^{\frac{5}{2}}\eps\int_{0}^{t}\norm{\ptxx\ueb(s,\cdot)}^2_{L^2(\R)}ds\le&C_{0},\\
\beta\eps\int_{0}^{t}\norm{\pxx\ueb(t,\cdot)}^2_{L^2(\R)}ds\le&C_{0},
\end{align*}
for every $0<t<T$.
\end{proof}
To prove Theorem \ref{th:main-1}. The following technical lemma is needed  \cite{Murat:Hneg}.
\begin{lemma}
\label{lm:1}
Let $\Omega$ be a bounded open subset of $
\R^2$. Suppose that the sequence $\{\mathcal
L_{n}\}_{n\in\mathbb{N}}$ of distributions is bounded in
$W^{-1,\infty}(\Omega)$. Suppose also that
\begin{equation*}
\mathcal L_{n}=\mathcal L_{1,n}+\mathcal L_{2,n},
\end{equation*}
where $\{\mathcal L_{1,n}\}_{n\in\mathbb{N}}$ lies in a
compact subset of $H^{-1}_{loc}(\Omega)$ and
$\{\mathcal L_{2,n}\}_{n\in\mathbb{N}}$ lies in a
bounded subset of $\mathcal{M}_{loc}(\Omega)$. Then $\{\mathcal
L_{n}\}_{n\in\mathbb{N}}$ lies in a compact subset of $H^{-1}_{loc}(\Omega)$.
\end{lemma}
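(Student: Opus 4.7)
The plan is to establish this via the classical interpolation argument due to Murat. The key observation is that although the set $\{\mathcal L_{2,n}\}$ sits only in bounded measures (which in two dimensions do not embed into $H^{-1}_{loc}$), they do lie compactly in a slightly weaker dual Sobolev space, and the $W^{-1,\infty}$ bound on the full sequence can be interpolated with this weaker compactness to recover compactness in $H^{-1}_{loc}$.

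First I would localize. To show $\{\mathcal L_n\}$ lies in a compact subset of $H^{-1}_{loc}(\Omega)$, it suffices to show that for each $\varphi\in C_c^\infty(\Omega)$ the sequence $\{\varphi\mathcal L_n\}$ lies in a compact subset of $H^{-1}(\Omega)$. Multiplication by $\varphi$ preserves boundedness in $W^{-1,\infty}$, precompactness in $H^{-1}_{loc}$ of the first piece, and boundedness in $\mathcal M_{loc}$ of the second piece. I fix an open bounded $\Omega'$ with $\supp\varphi \subset \Omega' \subset \overline{\Omega'}\subset \Omega$, and from now on work with distributions supported in $\Omega'$.

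Second, I would show that $\{\varphi \mathcal L_{2,n}\}$ is precompact in $W^{-1,p}(\Omega')$ for some $p \in (1,2)$. Since $\Omega \subset \R^2$, the Sobolev embedding $W^{1,p'}_0(\Omega') \hookrightarrow C_0(\Omega')$ is compact for every $p'>2$; dualizing this compact embedding gives the compact embedding $\mathcal M(\Omega') \hookrightarrow W^{-1,p}(\Omega')$ with conjugate exponent $p=p'/(p'-1) \in (1,2)$. Since $H^{-1}(\Omega')\hookrightarrow W^{-1,p}(\Omega')$ continuously (as $p<2$ and $\Omega'$ is bounded), the precompact sequence $\{\varphi \mathcal L_{1,n}\}$ is also precompact in $W^{-1,p}(\Omega')$, and therefore so is the sum $\{\varphi \mathcal L_n\}$.

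Finally, I would upgrade the $W^{-1,p}$ compactness to $H^{-1}$ compactness using the $W^{-1,\infty}$ bound via the interpolation inequality
\[
\norm{u}_{H^{-1}(\Omega')} \le C\,\norm{u}_{W^{-1,p}(\Omega')}^{\theta}\,\norm{u}_{W^{-1,\infty}(\Omega')}^{1-\theta},\qquad \theta = \frac{p}{2}\in(0,1),
\]
which follows from the real-interpolation identity $[W^{-1,p},W^{-1,\infty}]_{1-\theta} = W^{-1,2}$ (or equivalently from the Gagliardo--Nirenberg interpolation on Sobolev spaces of positive order by duality). Extracting a subsequence $\{\varphi\mathcal L_{n_k}\}$ that is Cauchy in $W^{-1,p}(\Omega')$ and applying the inequality to the difference $\varphi\mathcal L_{n_k}-\varphi\mathcal L_{n_j}$ together with the uniform $W^{-1,\infty}(\Omega')$ bound shows that the subsequence is Cauchy, hence convergent, in $H^{-1}(\Omega')$. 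The main obstacle is the clean justification of the interpolation inequality on the bounded domain $\Omega'$ in a way compatible with all three negative-order spaces at once; this is handled by a standard extension argument, relying on the fact that the distributions in question are compactly supported inside $\Omega'$.
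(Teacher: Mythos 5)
The paper does not actually prove this lemma: it is stated as a known technical result and attributed to Murat's paper \cite{Murat:Hneg}, so there is no in-paper argument to compare against. Your reconstruction is essentially the classical Murat--Tartar proof of exactly this statement, and it is correct in all its main steps: the localization by a cutoff $\varphi$ is legitimate because multiplication by $\varphi\in C_c^\infty(\Omega)$ is bounded on $W_0^{1,1}$, on $H^1_0$, and on $C_0$, hence by duality preserves each of the three hypotheses; the compact embedding $\mathcal M(\Omega')\hookrightarrow W^{-1,p}(\Omega')$ for $1<p<2$ follows, as you say, by Schauder's theorem from the compact dense-range embedding $W_0^{1,p'}(\Omega')\hookrightarrow C_0(\Omega')$ valid for $p'>2=\dim$; and the continuous embedding $H^{-1}(\Omega')\hookrightarrow W^{-1,p}(\Omega')$ then makes the whole sequence precompact in $W^{-1,p}(\Omega')$.

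The only step that deserves tightening is the final interpolation. The identity $[W^{-1,p},W^{-1,\infty}]_{1-\theta}=H^{-1}$ at the $L^\infty$ endpoint is more delicate than the interior case (the Bessel-potential characterization of $W^{-1,q}$ degenerates at $q=\infty$), and you do not need it: since $\Omega'$ is bounded, the uniform $W^{-1,\infty}(\Omega')$ bound immediately gives a uniform $W^{-1,q}(\Omega')$ bound for any finite $q>2$, and the interpolation inequality
\begin{equation*}
\norm{u}_{H^{-1}}\le \norm{u}_{W^{-1,p}}^{\theta}\,\norm{u}_{W^{-1,q}}^{1-\theta},
\qquad \frac{1}{2}=\frac{\theta}{p}+\frac{1-\theta}{q},\quad 1<p<2<q<\infty,
\end{equation*}
for compactly supported distributions is then the standard one (prove it on $\R^2$ via the lifting $u\mapsto (I-\Delta)^{-1/2}u$ and H\"older on the $L^r$ scale, after extending by zero). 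Applied to differences along a $W^{-1,p}$-Cauchy subsequence it yields the $H^{-1}$-Cauchy property exactly as you describe. With that single adjustment your argument is a complete and correct proof of the lemma, consistent with the reference the paper cites.
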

Moreover, we consider the following definition.
\begin{definition}
A pair of functions $(\eta, q)$ is called an  entropy--entropy flux pair if\\
 $\eta :\R\to\R$ is a $C^2$ function and $q :\R\to\R$ is defined by
\begin{equation*}
q(u)=2\int_{0}^{u} \xi\eta'(\xi) d\xi.
\end{equation*}
An entropy-entropy flux pair $(\eta,\, q)$ is called  convex/compactly supported if, in addition, $\eta$ is convex/compactly supported.
\end{definition}
We begin by proving the following result
\begin{lemma}\label{lm:259}
Assume that \eqref{eq:uo-l2}, \eqref{eq:u0eps-1} and \eqref{eq:beta-eps-2} hold. Then for any compactly
supported entropy--entropy flux pair $(\eta, \,q)$, there exist two sequences $\{\eps_{n}\}_{n\in\N},\,\{\beta_{n}\}_{n\in\N}$, with $\eps_n,\,\beta_n\to0$, and a limit function
\begin{equation*}
u\in L^{\infty}((0,T);L^2(\R)),
\end{equation*}
such that
\begin{align}
\label{eq:con-u-1}
&u_{\eps_{n},\,\beta_{n}}\to u \quad \textrm{in $L^p_{loc}((0,T)\times\R)$, for each $1\le p<2$},\\
\label{eq:u-dist12}
&u \quad \textrm{is a distributional solution of \eqref{eq:BU}}.
\end{align}
\end{lemma}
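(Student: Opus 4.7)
The strategy is the standard $L^p$ compensated-compactness argument. Multiplying \eqref{eq:Ro-eps-beta} by $\eta'(\ueb)$ and using the definition of $q$ gives the pointwise entropy identity
\begin{equation*}
\pt\eta(\ueb)+\px q(\ueb) = \eps\eta'(\ueb)\pxx\ueb - \beta\eta'(\ueb)\pxxx\ueb - \beta^2\eta'(\ueb)\ptxxxx\ueb.
\end{equation*}
Because $\eta$ is compactly supported, $\eta,\eta',\eta'',\eta'''$ are bounded and $\eta(\ueb),q(\ueb)\in L^{\infty}$, so the left-hand side is already bounded in $W^{-1,\infty}_{\mathrm{loc}}((0,T)\times\R)$. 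The plan is to rewrite the right-hand side as (compact in $H^{-1}_{\mathrm{loc}}$) $+$ (bounded in $\CMloc$), then apply Murat's Lemma \ref{lm:1} and Panov's framework \cite[Theorem 7.1]{CRS}.

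For the viscous term I would write
\begin{equation*}
\eps\eta'(\ueb)\pxx\ueb = \sqrt{\eps}\,\px\!\left(\eta'(\ueb)\sqrt{\eps}\,\px\ueb\right) - \eta''(\ueb)\,\eps\,(\px\ueb)^2,
\end{equation*}
the first piece tends to zero in $H^{-1}_{\mathrm{loc}}$ since $\sqrt{\eps}\px\ueb$ is bounded in $L^2$ by Lemma \ref{lm:38}, and the second is uniformly bounded in $L^1_{\mathrm{loc}}$. For the KdV term, using $\pxxx\eta(\ueb)=\eta'''(\ueb)(\px\ueb)^3+3\eta''(\ueb)\px\ueb\,\pxx\ueb+\eta'(\ueb)\pxxx\ueb$ and the product rule, I would rewrite
\begin{equation*}
\beta\eta'(\ueb)\pxxx\ueb = \beta\,\px\!\left[\eta'(\ueb)\pxx\ueb\right] - \tfrac{3\beta}{2}\,\px\!\left[\eta''(\ueb)(\px\ueb)^2\right] + \tfrac{\beta}{2}\,\eta'''(\ueb)(\px\ueb)^3,
\end{equation*}
and under $\beta=\OO(\eps^4)$ the weighted bounds of Lemma \ref{lm:50} (in particular $\beta^{1/2}\eps^{1/2}\pxx\ueb$ in $L^2_{t,x}$ and $\beta^{1/2}\px\ueb$ in $L^{\infty}_tL^2_x$) make the first two pieces vanish in $H^{-1}_{\mathrm{loc}}$ while the cubic remainder is bounded in $L^1_{\mathrm{loc}}$. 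The mixed fourth-order term $\beta^2\eta'(\ueb)\ptxxxx\ueb$ is handled analogously: by repeated integration by parts I distribute the four spatial and one time derivative so that each resulting piece is either $\px$ or $\pt$ of an $L^2$ object (vanishing in $H^{-1}_{\mathrm{loc}}$) or a lower-order product (bounded in $L^1_{\mathrm{loc}}$), leveraging the coupled spacetime $L^2$ estimates on $\beta^{3/4}\eps^{1/2}\ptx\ueb$, $\beta^{5/4}\eps^{1/2}\ptxx\ueb$, $\beta^{7/4}\eps^{1/2}\ptxxx\ueb$.

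Lemma \ref{lm:1} then yields $\pt\eta(\ueb)+\px q(\ueb)$ in a compact subset of $H^{-1}_{\mathrm{loc}}((0,T)\times\R)$ for every compactly supported $(\eta,q)$. Combined with the uniform $L^{\infty}((0,T);L^2(\R))$ bound from Lemma \ref{lm:38}, Panov's $L^p$ compensated-compactness theorem produces sequences $\eps_n,\beta_n\to 0$ and a limit $u\in L^{\infty}((0,T);L^2(\R))$ with $u_{\eps_n,\beta_n}\to u$ in $L^p_{\mathrm{loc}}$ for every $1\le p<2$, giving \eqref{eq:con-u-1}. Testing \eqref{eq:Ro-eps-beta} against $\varphi\in C^{\infty}_c$, the terms $\beta\pxxx\ueb$, $\beta^2\ptxxxx\ueb$, $\eps\pxx\ueb$ vanish in $\D'$ by the weighted estimates of Lemmas \ref{lm:38} and \ref{lm:50} together with \eqref{eq:beta-eps-2}, while $\ueb^2\to u^2$ in $L^1_{\mathrm{loc}}$ via a standard truncation argument that combines a.e.\ convergence on a subsequence with the $L^{\infty}_tL^2_x$ bound, yielding \eqref{eq:u-dist12}. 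The main obstacle is the decomposition of the mixed fourth-order dispersive term $\beta^2\eta'(\ueb)\ptxxxx\ueb$: because it couples one time and three spatial derivatives, only the precisely-scaled coupled weights of Lemma \ref{lm:50} under $\beta=\OO(\eps^4)$ are sharp enough so that every piece arising from the integrations by parts either vanishes in $H^{-1}_{\mathrm{loc}}$ or remains bounded in $\CMloc$.
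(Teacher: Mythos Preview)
Your overall strategy matches the paper's: multiply by $\eta'(\ueb)$, decompose the right-hand side into an $H^{-1}_{\mathrm{loc}}$-compact part and an $L^1_{\mathrm{loc}}$-bounded part, apply Murat's Lemma~\ref{lm:1}, and conclude via $L^p$ compensated compactness. However, your decomposition is more elaborate than necessary and contains an error.

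For the KdV term the paper simply writes
\[
-\beta\eta'(\ueb)\pxxx\ueb \;=\; -\px\bigl(\beta\eta'(\ueb)\pxx\ueb\bigr) \;+\; \beta\eta''(\ueb)\,\px\ueb\,\pxx\ueb,
\]
one integration by parts, and checks that the first piece vanishes in $H^{-1}$ while the second vanishes in $L^1$. Your three-term identity is not needed, and as written it is incorrect: expanding your right-hand side gives $\beta\eta'(\ueb)\pxxx\ueb - 2\beta\eta''(\ueb)\px\ueb\pxx\ueb - \beta\eta'''(\ueb)(\px\ueb)^3$, not $\beta\eta'(\ueb)\pxxx\ueb$. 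The correct coefficients are $-\tfrac{1}{2}$ and $+\tfrac{1}{2}$, not $-\tfrac{3}{2}$ and $+\tfrac{1}{2}$.

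For the mixed fourth-order term the paper again uses a single integration by parts,
\[
-\beta^2\eta'(\ueb)\ptxxxx\ueb \;=\; -\px\bigl(\beta^2\eta'(\ueb)\ptxxx\ueb\bigr) \;+\; \beta^2\eta''(\ueb)\,\px\ueb\,\ptxxx\ueb,
\]
so only the weights $\beta^{7/4}\eps^{1/2}\ptxxx\ueb\in L^2$ and $\beta^{1/2}\px\ueb\in L^\infty_t L^2_x$ are required. Your ``repeated integration by parts distributing the four spatial and one time derivative'' would work but is overkill; the single step already suffices and keeps the bookkeeping minimal. The remainder of your argument (passage to the limit, $\ueb^2\to u^2$ in $L^1_{\mathrm{loc}}$) is fine and agrees with the paper.
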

\begin{proof}
Let us consider a compactly supported entropy--entropy flux pair $(\eta, q)$. Multiplying \eqref{eq:Ro-eps-beta} by $\eta'(\ueb)$, we have
\begin{align*}
\pt\eta(\ueb) + \px q(\ueb) =&\eps \eta'(\ueb) \pxx\ueb -\beta\pxxx\ueb -\beta^2\eta'(\ueb)\ptxxxx\ueb \\
=& I_{1,\,\eps,\,\beta}+I_{2,\,\eps,\,\beta}+ I_{3,\,\eps,\,\beta} + I_{4,\,\eps,\,\beta}+I_{5,\,\eps,\,\beta} + I_{6,\,\eps,\,\beta},
\end{align*}
where
\begin{equation}
\begin{split}
\label{eq:12000}
I_{1,\,\eps,\,\beta}&=\px(\eps\eta'(\ueb)\px\ueb),\\
I_{2,\,\eps,\,\beta}&= -\eps\eta''(\ueb)(\px\ueb)^2,\\
I_{3,\,\eps,\,\beta}&= -\px(\beta\eta'(\ueb)\pxx\ueb),\\
I_{4,\,\eps,\,\beta}&= \beta\eta''(\ueb)\px\ueb\pxx\ueb,\\
I_{5,\,\eps,\,\beta}&= -\px(\beta^2\eta'(\ueb)\ptxxx\ueb),\\
I_{6,\,\eps,\,\beta}&= \beta^2\eta''(\ueb)\px\ueb\ptxxx\ueb.
\end{split}
\end{equation}
Fix $T>0$. Arguing in \cite[Lemma $3.2$]{Cd2}, we have that $I_{1,\,\eps,\,\beta}\to0$ in $H^{-1}((0,T) \times\R)$, and $\{I_{2,\,\eps,\,\beta}\}_{\eps,\beta >0}$ is bounded in $L^1((0,T)\times\R)$. Arguing in \cite[Theorem $B.1$]{Cd6},  $I_{3,\,\eps,\,\beta}\to0$ in $H^{-1}((0,T) \times\R)$, and $I_{4,\,\eps,\,\beta}\to0$ in $L^{1}((0,T) \times\R)$, while arguing in \cite[Lemma $2.4$]{Cd6}, $I_{5,\,\eps,\,\beta}\to0$ in $H^{-1}((0,T) \times\R)$,
and $\{I_{6,\,\eps,\,\beta}\}_{\eps,\beta >0}$ is bounded in $L^1((0,T)\times\R)$.

Therefore, \eqref{eq:con-u-1} follows from Lemmas \ref{lm:38}, \ref{lm:1} and the $L^p$ compensated compactness of \cite{SC}.\\
Arguing in \cite[Theorem $2.1$]{Cd5}, we have \eqref{eq:u-dist12}.
\end{proof}
Following \cite{LN}, we prove the following result
\begin{lemma}\label{lm:452}
Assume that \eqref{eq:uo-l2}, \eqref{eq:u0eps-1} and \eqref{eq:beta-eps-4} hold. Then  for any compactly
supported entropy--entropy flux pair $(\eta, \,q)$, there exist two sequences $\{\eps_{n}\}_{n\in\N},\,\{\beta_{n}\}_{n\in\N}$, with $\eps_n,\,\beta_n\to0$, and a limit function
\begin{equation*}
u\in L^{\infty}((0,T);L^2(\R)),
\end{equation*}
such that
\eqref{eq:con-u-1} holds and
\begin{equation}
\label{eq:u-entro-sol-12}
u \quad \textrm{is the unique entropy solution of \eqref{eq:BU}}.
\end{equation}
\end{lemma}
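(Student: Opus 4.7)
The plan is to follow the proof of Lemma \ref{lm:259} and exploit the stronger assumption $\beta = o(\eps^4)$ to upgrade the two residual terms $I_{4,\eps,\beta}$ and $I_{6,\eps,\beta}$ from merely $L^{1}$-bounded to $L^{1}$-convergent to zero. Fix a convex compactly supported entropy--entropy flux pair $(\eta,q)$ and multiply \eqref{eq:Ro-eps-beta} by $\eta'(\ueb)$ to obtain the decomposition
\begin{equation*}
\pt\eta(\ueb) + \px q(\ueb) = \sum_{j=1}^{6} I_{j,\eps,\beta}
\end{equation*}
with $I_{j,\eps,\beta}$ as in \eqref{eq:12000}. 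From the proof of Lemma \ref{lm:259} the divergence-type terms $I_{1,\eps,\beta},\,I_{3,\eps,\beta},\,I_{5,\eps,\beta}$ tend to zero in $H^{-1}((0,T)\times\R)$, while $I_{2,\eps,\beta} = -\eps\eta''(\ueb)(\px\ueb)^2\le 0$ by convexity of $\eta$, so no sign assumption on this term needs changing.

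The new task is to show $I_{4,\eps,\beta}, I_{6,\eps,\beta}\to 0$ in $L^{1}((0,T)\times\R)$. Using $\norm{\eta''}_{L^\infty}<\infty$ and Cauchy--Schwarz,
\begin{equation*}
\norm{I_{4,\eps,\beta}}_{L^1((0,T)\times\R)} \le C\beta\norm{\px\ueb}_{L^2((0,T)\times\R)}\norm{\pxx\ueb}_{L^2((0,T)\times\R)}.
\end{equation*}
Lemma \ref{lm:38} yields $\norm{\px\ueb}_{L^2((0,T)\times\R)}^2\le C\eps^{-1}$, and item $(ii)$ of Lemma \ref{lm:50} yields $\norm{\pxx\ueb}_{L^2((0,T)\times\R)}^2\le C\beta^{-1}\eps^{-1}$, so $\norm{I_{4,\eps,\beta}}_{L^1}\le C\beta^{1/2}\eps^{-1}$. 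Similarly, item $(ii)$ of Lemma \ref{lm:50} gives $\norm{\ptxxx\ueb}_{L^2((0,T)\times\R)}^2\le C\beta^{-7/2}\eps^{-1}$, hence
\begin{equation*}
\norm{I_{6,\eps,\beta}}_{L^1((0,T)\times\R)} \le C\beta^{2}\norm{\px\ueb}_{L^2}\norm{\ptxxx\ueb}_{L^2} \le C\beta^{1/4}\eps^{-1}.
\end{equation*}
Under \eqref{eq:beta-eps-4} we have $\beta^{1/2}\eps^{-1}=o(\eps)$ and $\beta^{1/4}\eps^{-1}=o(1)$, so both error terms vanish in $L^{1}((0,T)\times\R)$, a fortiori in $\Dp((0,T)\times\R)$.

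Passing to the limit along the subsequence from Lemma \ref{lm:259}, and using \eqref{eq:con-u-1} to identify the limits of the nonlinear expressions $\eta(\ueb)$ and $q(\ueb)$, we obtain
\begin{equation*}
\pt\eta(u)+\px q(u)\le 0 \quad \textrm{in } \Dp((0,T)\times\R)
\end{equation*}
for every convex compactly supported $\eta\in C^{2}(\R)$. A standard cutoff/truncation argument extends this inequality to every convex $\eta\in C^{2}(\R)$, so $u$ satisfies \Kruzkovs entropy condition, and \eqref{eq:u-entro-sol-12} follows from the uniqueness of entropy solutions of \eqref{eq:BU}. The principal technical obstacle is that the margin between $\beta=\mathbf{\mathcal{O}}(\eps^4)$ and $\beta=o(\eps^4)$ is extremely tight: the exponents of $\beta$ appearing in the estimates for $I_{4,\eps,\beta}$ and $I_{6,\eps,\beta}$ (respectively $1/2$ and $1/4$) combined with the $\eps^{-1}$ factor force exactly the little-$o$ strengthening, with no slack left for a softer hypothesis on the scaling.
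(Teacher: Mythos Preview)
Your proposal is correct and follows essentially the same approach as the paper: the same decomposition \eqref{eq:12000}, the same treatment of the divergence-type terms $I_{1},I_{3},I_{5}$ in $H^{-1}$, and the upgrade of $I_{6,\eps,\beta}$ from $L^{1}$-bounded to $L^{1}$-null using the stronger scaling \eqref{eq:beta-eps-4}. Your explicit Cauchy--Schwarz estimates using Lemmas \ref{lm:38} and \ref{lm:50} are exactly what underlies the citations the paper gives to \cite{Cd6}; note only that $I_{4,\eps,\beta}\to 0$ in $L^{1}$ was already recorded in Lemma \ref{lm:259} under the weaker hypothesis \eqref{eq:beta-eps-2}, so the genuinely new step is the $I_{6}$ estimate, where your bound $C\beta^{1/4}\eps^{-1}=o(1)$ is precisely what forces the little-$o$ assumption.
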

\begin{proof}
Let us consider a compactly supported entropy-entropy flux pair $(\eta,\,q)$. Multiplying \eqref{eq:Ro-eps-beta} by $\eta'(\ueb)$, we have
\begin{align*}
\pt\eta(\ueb) + \px q(\ueb) =&\eps \eta'(\ueb) \pxx\ueb-\beta\pxxx\ueb -\beta^2\eta'(\ueb)\ptxxxx\ueb \\
=& I_{1,\,\eps,\,\beta}+I_{2,\,\eps,\,\beta}+ I_{3,\,\eps,\,\beta} + I_{4,\,\eps,\,\beta}+ I_{5,\,\eps,\,\beta} + I_{6,\,\eps,\,\beta}
\end{align*}
where $I_{1,\,\eps,\,\beta},\,I_{2,\,\eps,\,\beta},\, I_{3,\,\eps,\,\beta},\, I_{4,\,\eps,\,\beta},\,I_{5,\,\eps,\,\beta},\,I_{6,\,\eps,\,\beta}$ are defined in \eqref{eq:12000}.

As in Lemma \ref{lm:259}, we obtain that $I_{1,\,\eps,\,\beta}\to 0$ in $H^{-1}((0,T)\times\R)$, $\{I_{2,\,\eps,\,\beta}\}_{\eps,\beta>0}$ is bounded in $L^1((0,T)\times\R)$,  $I_{3,\,\eps,\,\beta}\to 0$ in $H^{-1}((0,T)\times\R)$, $I_{4,\,\eps,\,\beta}\to 0$ in $L^1((0,T)\times\R)$, $I_{5,\,\eps,\,\beta}\to 0$ in $H^{-1}((0,T)\times\R)$, while arguing in \cite[Lemma $2.4$]{Cd6}, $I_{6,\,\eps,\,\beta}\to 0$ in $L^1((0,T)\times\R)$

Arguing in \cite[Theorem $2.1$]{Cd5}, we have \eqref{eq:u-entro-sol-12}.
\end{proof}
\begin{proof}[Proof of Theorem \ref{th:main-1}]
Theorem \ref{th:main-1} follows from Lemmas \ref{lm:259} and \ref{lm:452}.
\end{proof}

\section{The Rosenau-KdV-equation. $u_0\in L^2(\R)\cap L^4(\R)$.}\label{sec:D1}
In this section,  we consider \eqref{eq:RKV33}, and  assume \eqref{eq:uo-l4} on the initial datum.
We consider the approximation \eqref{eq:Ro-eps-beta}, where $u_{\eps,\beta,0}$ is a $C^\infty$ approximation of $u_{0}$ such that
\begin{equation}
\begin{split}
\label{eq:u0eps-14}
&u_{\eps,\,\beta,\,0} \to u_{0} \quad  \textrm{in $L^{p}_{loc}(\R)$, $1\le p < 2$, as $\eps,\,\beta \to 0$,}\\
&\norm{u_{\eps,\beta, 0}}^4_{L^4(\R)}+\norm{u_{\eps,\beta, 0}}^2_{L^2(\R)}+\left(\beta^{\frac{1}{2}}+ \eps^2\right) \norm{\px u_{\eps,\beta,0}}^2_{L^2(\R)}\le C_0,\quad \eps,\beta >0,\\
&\left(\beta^2+\beta\eps^2\right) \norm{\pxx u_{\eps,\beta,0}}^2_{L^2(\R)} +\left(\beta^{\frac{5}{2}}+\beta^2\eps^2\right)\norm{\pxxx u_{\eps,\beta,0}}^2_{L^2(\R)}\le C_0,\quad \eps,\beta >0,\\
&\beta^4\norm{\pxxxx u_{\eps,\beta,0}}^2_{L^2(\R)}\le C_0,\quad \eps,\beta >0,
\end{split}
\end{equation}
and $C_0$ is a constant independent on $\eps$ and $\beta$.

The main result of this section is the following theorem.
\begin{theorem}
\label{th:main-13}
Assume that \eqref{eq:uo-l4} and  \eqref{eq:u0eps-14} hold. Fix $T>0$,
if \eqref{eq:beta-eps-2} holds, there exist two sequences $\{\eps_{n}\}_{n\in\N}$, $\{\beta_{n}\}_{n\in\N}$, with $\eps_n, \beta_n \to 0$, and a limit function
\begin{equation*}
u\in L^{\infty}((0,T); L^2(\R)\cap L^4(\R)),
\end{equation*}
such that
\begin{itemize}
\item[$i)$] $u_{\eps_n, \beta_n}\to u$  strongly in $L^{p}_{loc}(\R^{+}\times\R)$, for each $1\le p <4$,
\item[$ii)$] $u$ is the unique entropy solution of \eqref{eq:BU}.
\end{itemize}
\end{theorem}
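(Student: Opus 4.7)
The proof follows the three-step scheme of Theorem \ref{th:main-1}: a priori estimates uniform in $\eps,\beta$; extraction of a subsequence converging in $L^{p}_{\loc}$ via compensated compactness; and identification of the limit as the unique entropy solution of \eqref{eq:BU}. The structural gain over Section \ref{sec:Ro1} is that the additional $L^{4}$ control on $u_{0}$ produces a sharper $L^{\infty}$ bound (Lemma \ref{lm:562}), which in turn makes the entropy selection work already under the mere $\beta=\mathcal{O}(\eps^{4})$ assumption rather than the strict $\beta=o(\eps^{4})$ needed in \eqref{eq:beta-eps-4}.

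First I would prove an energy identity complementing Lemma \ref{lm:38} with an $L^{4}$ estimate. Multiplying \eqref{eq:Ro-eps-beta} by $\ueb^{3}$ and integrating, the convective term vanishes and $\beta^{2}\int\ueb^{3}\ptxxxx\ueb\,dx$ provides the positive time-derivative pieces, while the two cross terms
\begin{equation*}
\beta\int\ueb^{3}\pxxx\ueb\,dx = 3\beta\int\ueb(\px\ueb)^{3}\,dx,\qquad \eps\int\ueb^{3}\pxx\ueb\,dx = -3\eps\int\ueb^{2}(\px\ueb)^{2}\,dx,
\end{equation*}
are handled by Young's inequality: the dispersive contribution is absorbed into the viscous one as soon as $\beta\|\ueb\|_{L^{\infty}}\le c\eps$, which is exactly the regime enforced by \eqref{eq:beta-eps-2}. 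This yields the $L^{4}$ control $\|\ueb(t,\cdot)\|_{L^{4}}^{4}\le C_{0}$ on top of \eqref{eq:l-2-u1}. Next I would establish Lemma \ref{lm:t3}, the analogue of Lemma \ref{lm:50}. As the authors announce, four free parameters are needed: test \eqref{eq:Ro-eps-beta} against
\begin{equation*}
-A_{1}\beta^{1/2}\pxx\ueb - A_{2}\beta\eps\ptxx\ueb + A_{3}\eps\pt\ueb + A_{4}\beta^{2}\pxxxx\ueb,
\end{equation*}
tune $A_{1},A_{2},A_{3},A_{4}$ so that each of the cross terms produced by the third-order dispersion $\beta\pxxx\ueb$ and by the fourth-order mixed term $\beta^{2}\ptxxxx\ueb$ is strictly dominated by the positive dissipation, and integrate in time. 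Combining the resulting $\|\px\ueb\|_{L^{2}}$ bound with $\ueb^{2}(t,x) = 2\int_{-\infty}^{x}\ueb\px\ueb\,dx \le \|\ueb\|_{L^{2}}\|\px\ueb\|_{L^{2}}$ then delivers Lemma \ref{lm:562}, an $L^{\infty}$ estimate strictly better than \eqref{eq:u-infty-3}, whose proof is shorter than that of Lemma \ref{lm:50} because the $L^{4}$ energy already controls the quantities that had to be bootstrapped there.

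For compactness and identification I would follow Lemmas \ref{lm:259} and \ref{lm:452}: decompose $\pt\eta(\ueb)+\px q(\ueb)$ as in \eqref{eq:12000} and bound the six pieces. The analyses of $I_{1,\eps,\beta},\dots,I_{5,\eps,\beta}$ carry over verbatim, the essential novelty lying in $I_{6,\eps,\beta}=\beta^{2}\eta''(\ueb)\px\ueb\ptxxx\ueb$: the improved Lemma \ref{lm:562} bound on $\|\ueb\|_{L^{\infty}}$, together with the $L^{2}$ control on $\beta^{3/2}\ptxxx\ueb$ from Lemma \ref{lm:t3} and the uniform $L^{4}$ bound of the initial step, upgrades $I_{6,\eps,\beta}$ from merely $L^{1}$-bounded to $L^{1}((0,T)\times\R)$-strongly convergent to zero, still under $\beta=\mathcal{O}(\eps^{4})$. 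Lemma \ref{lm:1} and the $L^{p}$ compensated compactness of \cite{SC} then provide sequences $\eps_{n},\beta_{n}\to 0$ along which $u_{\eps_{n},\beta_{n}}\to u$ pointwise; the uniform $L^{4}$ bound and interpolation with the uniform $L^{2}$ bound upgrade this to strong convergence in $L^{p}_{\loc}(\R^{+}\times\R)$ for every $1\le p<4$. Passing to the limit in \eqref{eq:Ro-eps-beta} and in the family of \Kruzkov entropy inequalities yields both $i)$ and $ii)$.

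The main obstacle is Lemma \ref{lm:t3}: the simultaneous presence of $\beta\pxxx\ueb$ and $\beta^{2}\ptxxxx\ueb$ produces four distinct dangerous cross terms in the four-parameter energy identity, each weighted by a different power of $\beta$, and the four constants $A_{1},\ldots,A_{4}$ must be chosen so that every negative contribution is strictly dominated simultaneously. Once this multiplier identity is closed, $L^{\infty}$-to-$L^{p}$ interpolation, compensated compactness and the passage to the entropy limit follow the blueprint already laid down in Section \ref{sec:Ro1}.
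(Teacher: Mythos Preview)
Your high-level scheme (a priori estimates, compensated compactness, entropy selection) matches the paper's, but the ordering and content of the a priori lemmas are inverted in a way that makes your argument circular.

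In the paper, Lemma \ref{lm:562} is proved \emph{first}, by testing \eqref{eq:Ro-eps-beta} against the single multiplier $-\beta^{1/2}\pxx\ueb$; it yields exactly the bound \eqref{eq:u-infty-3} (not a strictly better one, as you claim) together with the derivative estimate \eqref{eq:Z46} on $\|\px\ueb\|_{L^{\infty}}$. Only \emph{afterwards} is Lemma \ref{lm:t3} established, and its proof uses \eqref{eq:u-infty-3} and \eqref{eq:Z46} to control the dangerous cross terms. You reverse this: you want the $L^{\infty}$ bound of Lemma \ref{lm:562} to be a consequence of Lemma \ref{lm:t3}. But your proposed stand-alone $L^{4}$ estimate (testing against $\ueb^{3}$ only) cannot be closed without an $L^{\infty}$ input: the term $3\beta\int\ueb(\px\ueb)^{3}\,dx$ is not dominated by $3\eps\int\ueb^{2}(\px\ueb)^{2}\,dx$ via Young unless you already know something like $\beta\|\px\ueb\|_{L^{\infty}}\lesssim\eps$, and your claimed condition ``$\beta\|\ueb\|_{L^{\infty}}\le c\eps$'' presupposes the very $L^{\infty}$ bound you are trying to derive later. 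Likewise, $\beta^{2}\int\ueb^{3}\ptxxxx\ueb\,dx=-3\beta^{2}\int\ueb^{2}\px\ueb\,\ptxxx\ueb\,dx$ does not produce ``positive time-derivative pieces'' as you assert; in the paper it is one more cross term to be estimated.

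The paper also does not separate the $L^{4}$ estimate from the derivative estimates: Lemma \ref{lm:t3} tests against the \emph{combined} multiplier $\ueb^{3}-A\eps^{2}\pxx\ueb-B\beta\eps\ptxx\ueb+C\eps\pt\ueb+E\beta^{2}\pxxxx\ueb$ (note $\eps^{2}$, not $\beta^{1/2}$, in front of $\pxx\ueb$), and the four constants $A,B,C,E$ together with the implicit constant $D$ in \eqref{eq:beta-eps-2} are tuned simultaneously. Once the correct order (\ref{lm:562} then \ref{lm:t3}) and multipliers are in place, your compensated-compactness and entropy-limit paragraphs are essentially those of the paper.
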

Let us prove some a priori estimates on $\ueb$, denoting with $C_0$ the constants which depend only on the initial data.
\begin{lemma}\label{lm:562}
Fix $T>0$. Assume \eqref{eq:beta-eps-2} holds. There exists $C_0>0$, independent on $\eps,\,\beta$ such that \eqref{eq:u-infty-3} holds.
In particular, we have
\begin{equation}
\label{eq:Z45}
\begin{split}
\beta\norm{\px\ueb(t,\cdot)}^2_{L^2(\R)}&+ \beta^3\norm{\pxxx\ueb(t,\cdot)}^2_{L^2(\R)}\\ &+\frac{3\beta\eps}{2}\int_{0}^{t}\norm{\pxx\ueb(s,\cdot)}^2_{L^2(\R)}ds\le C_0,
\end{split}
\end{equation}
for every $0<t<T$. Moreover,
\begin{equation}
\label{eq:Z46}
\norm{\px\ueb}_{L^{\infty}((0,T)\times\R)}\le C_0\beta^{-\frac{3}{4}}.
\end{equation}
\end{lemma}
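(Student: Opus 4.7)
The plan is to multiply \eqref{eq:Ro-eps-beta} by $-2\beta\pxx\ueb$ and integrate over $\R$. With this specific multiplier, routine integration by parts yields
\begin{equation*}
\beta\tfrac{d}{dt}\|\px\ueb\|^2_{L^2(\R)} + \beta^3\tfrac{d}{dt}\|\pxxx\ueb\|^2_{L^2(\R)} + 2\beta\eps\|\pxx\ueb\|^2_{L^2(\R)} = -2\beta\int_\R (\px\ueb)^3 \,dx,
\end{equation*}
where I would use $\int \pxx\ueb\,\pxxx\ueb \,dx = 0$, the standard identity $\int \ueb \px\ueb \pxx\ueb \,dx = -\tfrac{1}{2}\int (\px\ueb)^3 \,dx$ for the Burgers term, and one integration by parts for the Rosenau term. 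The coefficient $2$ in the multiplier is picked so as to produce precisely the constant $\tfrac{3\beta\eps}{2}$ demanded by \eqref{eq:Z45} after the nonlinear-term absorption below.

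Integrating this identity on $[0,t]$, the initial contributions $\beta\|\px u_{\eps,\beta,0}\|^2_{L^2}$ and $\beta^3\|\pxxx u_{\eps,\beta,0}\|^2_{L^2}$ are bounded by $C_0$ through \eqref{eq:u0eps-14}. The cubic nonlinearity is handled by rewriting $\int (\px\ueb)^3 \,dx = -2\int \ueb\px\ueb\pxx\ueb\,dx$, applying H\"older in $x$, Cauchy--Schwarz in time, and Young's inequality with a weight chosen so that the $\|\pxx\ueb\|^2$-factor equals exactly $\tfrac{\beta\eps}{2}\int_0^t\|\pxx\ueb\|^2_{L^2}\,ds$:
\begin{equation*}
\Bigl|2\beta\int_0^t\!\!\int_\R (\px\ueb)^3\,dx\,ds\Bigr| \le \frac{8\beta}{\eps}\int_0^t \|\ueb\|^2_{L^\infty}\|\px\ueb\|^2_{L^2}\,ds + \frac{\beta\eps}{2}\int_0^t\|\pxx\ueb\|^2_{L^2}\,ds.
\end{equation*}
Absorption into the left-hand side then leaves exactly $\tfrac{3\beta\eps}{2}\int_0^t\|\pxx\ueb\|^2_{L^2}\,ds$, as claimed.

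To close the estimate I would apply the one-dimensional Sobolev inequality $\|\ueb\|^2_{L^\infty} \le 2\|\ueb\|_{L^2}\|\px\ueb\|_{L^2}$ together with Lemma~\ref{lm:38}, which furnishes both $\|\ueb(t,\cdot)\|_{L^2} \le \sqrt{C_0}$ and the space-time bound $\int_0^T \|\px\ueb\|^2_{L^2}\,ds \le C_0/(2\eps)$, giving
\begin{equation*}
\int_0^t \|\ueb\|^2_{L^\infty}\|\px\ueb\|^2_{L^2}\,ds \le 2\sqrt{C_0}\,\sup_{s\le t}\|\px\ueb(s,\cdot)\|_{L^2}\!\int_0^t\!\|\px\ueb\|^2_{L^2}\,ds \le \frac{C_0^{3/2}}{\eps}\,\sup_{s\le t}\|\px\ueb(s,\cdot)\|_{L^2}.
\end{equation*}
Introducing $M(t) = \sup_{s\le t}\beta^{1/2}\|\px\ueb(s,\cdot)\|_{L^2}$ and invoking assumption \eqref{eq:beta-eps-2} so that $\beta^{1/2}/\eps^2$ is uniformly bounded, the master inequality takes the form $M(t)^2 \le C_0 + C_0' M(t)$, a quadratic inequality that bounds $M(t)$ by a constant depending only on $C_0$ and the implicit constant in \eqref{eq:beta-eps-2}. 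Since the other two nonnegative terms $\beta^3\|\pxxx\ueb(t)\|^2_{L^2}$ and $\tfrac{3\beta\eps}{2}\int_0^t\|\pxx\ueb\|^2_{L^2}\,ds$ are also dominated by the same master inequality, \eqref{eq:Z45} follows in full.

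Finally, \eqref{eq:u-infty-3} and \eqref{eq:Z46} both follow from the Sobolev inequality $\|f\|^2_{L^\infty} \le 2\|f\|_{L^2}\|\px f\|_{L^2}$: for \eqref{eq:u-infty-3}, I would take $f = \ueb$ and combine $\|\ueb\|_{L^2}\le \sqrt{C_0}$ from Lemma~\ref{lm:38} with $\|\px\ueb\|_{L^2} \le \sqrt{C_0/\beta}$ from \eqref{eq:Z45}; for \eqref{eq:Z46}, I would take $f = \px\ueb$ and combine $\|\px\ueb\|_{L^2}\le \sqrt{C_0/\beta}$ from \eqref{eq:Z45} with $\|\pxx\ueb\|_{L^2} \le \sqrt{C_0}/\beta$ from Lemma~\ref{lm:38}. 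The principal technical hurdle is the closure of the quadratic-in-$M$ inequality in the third paragraph, where the precise hypothesis $\beta = \mathcal{O}(\eps^4)$ enters through making the coefficient $\beta^{1/2}/\eps^2$ bounded; this is exactly what prevents the nonlinear cubic contribution from overpowering the leading $M(t)^2$ term and allows the uniform bound on $M(t)$ to be extracted.
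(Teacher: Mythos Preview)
Your proof is correct and follows essentially the same approach as the paper: multiply \eqref{eq:Ro-eps-beta} by a constant multiple of $\pxx\ueb$, exploit the vanishing of $\int\pxx\ueb\,\pxxx\ueb\,dx$, and close using $\beta^{1/2}/\eps^2=\mathcal{O}(1)$ from \eqref{eq:beta-eps-2}. The paper uses the multiplier $-\beta^{1/2}\pxx\ueb$ and defers the closing argument to \cite[Lemma~3.1]{Cd6}, whereas you use $-2\beta\pxx\ueb$ and spell out an explicit quadratic-in-$M(t)$ bootstrap; both lead to \eqref{eq:Z45}, and your derivations of \eqref{eq:u-infty-3} and \eqref{eq:Z46} via the one-dimensional Sobolev inequality match the intended route.
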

\begin{remark}
Observe that the proof of Lemma \ref{lm:562} is simpler than the one of Lemma \ref{lm:50}. Indeed, we only need to prove \eqref{eq:u-infty-3}.
\end{remark}
\begin{proof}[Proof of Lemma \ref{lm:562}.]
Let $0<t<T$. Multiplying \eqref{eq:Ro-eps-beta} by $-\beta^{\frac{1}{2}}\pxx\ueb$, we have
\begin{equation}
\label{eq:K23}
\begin{split}
-\beta^{\frac{1}{2}}\pxx\ueb\pt\ueb &- 2\beta^{\frac{1}{2}}\ueb\px\ueb\pxx\ueb\\
&+\beta^{\frac{3}{2}}\pxx\ueb\pxxx\ueb-\beta^{\frac{5}{2}}\ptxxxx\ueb\pxx\ueb=  -\beta^{\frac{1}{2}}\eps(\pxx\ueb)^2.
\end{split}
\end{equation}
We note that
\begin{equation*}
\beta^{\frac{3}{2}}\int_{\R}\pxx\ueb\pxxx\ueb dx =0.
\end{equation*}
Therefore, arguing as \cite[Lemma $3.1$]{Cd6}, we have \eqref{eq:u-infty-3}, \eqref{eq:Z45} and \eqref{eq:Z46}.
\end{proof}
Following \cite[Lemma $2.2$]{Cd}, or \cite[Lemma $4.2$]{CK}, we prove the following result.
\begin{lemma}\label{lm:t3}
Fix $T>0$. Assume \eqref{eq:beta-eps-2} holds. Then:
\begin{itemize}
\item[$i)$] the family  $\{\ueb\}_{\eps,\,\beta}$ is bounded in $L^{\infty}((0,T);L^{4}(\R))$;
\item[$ii)$] the families $\{\eps\px\ueb\}_{\eps,\,\beta},\,\{\beta^{\frac{1}{2}}\eps\pxx\ueb\}_{\eps,\beta},\, \{\beta\pxx\ueb\}_{\eps,\beta},$\\
 $\{\beta\eps\pxxx\ueb\}_{\eps,\beta},\, \{\beta\pxxxx\ueb\}_{\eps,\beta} $are bounded in $L^{\infty}((0,T);L^{2}(\R))$;
\item[$iii)$] the families $\{\beta^{\frac{1}{2}}\eps^{\frac{1}{2}}\ptx\ueb\}_{\eps,\,\beta},\,\{\eps^{\frac{1}{2}}\pt\ueb\}_{\eps,\,\beta},\, \{\beta^{\frac{3}{2}}\eps^{\frac{1}{2}}\ptxxx\ueb\}_{\eps,\,\beta},$\\
$\{\beta\eps^{\frac{1}{2}}\ptxx\ueb\}_{\eps,\,\beta},\,\{\eps^{\frac{1}{2}}\ueb\px\ueb\}_{\eps,\,\beta}
 \{\eps^{\frac{3}{2}}\pxx\ueb\}_{\eps,\,\beta},\, \{\beta\eps^{\frac{1}{2}}\pxxx\ueb\}_{\eps,\,\beta},\,$ are bounded in $L^{2}((0,T)\times\R)$;
\end{itemize}
\end{lemma}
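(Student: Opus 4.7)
The plan is to multiply equation \eqref{eq:Ro-eps-beta} by a test function of the form
$$\psi = \ueb^3 + A\eps\pt\ueb - B\eps^2\pxx\ueb + C\beta^2\pxxxx\ueb - D\beta\eps\ptxx\ueb,$$
depending on four positive parameters $A,B,C,D$ to be chosen, and then integrate over $\R$ and integrate by parts. The factor $\ueb^3$ is there to give (i); the remaining four components are calibrated so that, after all boundary terms are discarded by the decay at infinity, the left-hand side assembles into the time derivative
$$\frac{d}{dt}\!\left[\tfrac14\|\ueb\|^4_{L^4(\R)} + \tfrac{(A+B)\eps^2}{2}\|\px\ueb\|^2_{L^2(\R)} + \tfrac{D\beta\eps^2+C\beta^2}{2}\|\pxx\ueb\|^2_{L^2(\R)} + \tfrac{C\beta^4}{2}\|\pxxxx\ueb\|^2_{L^2(\R)}\right],$$
together with dissipative contributions $3\eps\!\int\!\ueb^2(\px\ueb)^2 dx$, $A\eps\|\pt\ueb\|^2_{L^2(\R)}$, $B\eps^3\|\pxx\ueb\|^2_{L^2(\R)}$, $D\beta\eps\|\ptx\ueb\|^2_{L^2(\R)}$, $A\beta^2\eps\|\ptxx\ueb\|^2_{L^2(\R)}$ and $C\beta^2\eps\|\pxxx\ueb\|^2_{L^2(\R)}$. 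Integrating in $t$ on $(0,T)$, these are exactly the quantities whose uniform bound produces (ii), save for $\{\beta\eps\pxxx\ueb\}_{\eps,\beta}$, and every bound of (iii).

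The hard part is to absorb the cubic cross-terms generated by pairing the nonlinearity $\px\ueb^2=2\ueb\px\ueb$ and the dispersion $\beta\pxxx\ueb$ with the non-$\ueb^3$ components of $\psi$. The most delicate is
$$3\beta\!\int_{\R}\!\ueb(\px\ueb)^3 dx = \beta\!\int_{\R}\!\ueb^3\pxxx\ueb dx,$$
which appears from pairing $\ueb^3$ with $\beta\pxxx\ueb$. Similar cubic expressions of the form $\int\ueb\px\ueb\pxx\ueb dx$, $\int\ueb\px\ueb\pxxxx\ueb dx$ and $\int\ueb\px\ueb\ptxx\ueb dx$ arise when the nonlinearity is tested against $-B\eps^2\pxx\ueb$, $C\beta^2\pxxxx\ueb$ and $-D\beta\eps\ptxx\ueb$. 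I will control each of them by combining the pointwise estimates $\|\ueb\|_{L^\infty}\le C_0\beta^{-1/4}$ and $\|\px\ueb\|_{L^\infty}\le C_0\beta^{-3/4}$ of Lemma \ref{lm:562}, the $L^2$ estimates \eqref{eq:l-2-u1} and \eqref{eq:Z45}, and the scaling $\beta=\mathcal{O}(\eps^4)$ from \eqref{eq:beta-eps-2}. For the critical term, for instance,
$$\left|3\beta\!\int_{\R}\!\ueb(\px\ueb)^3 dx\right| \le 3\beta\|\ueb\|_{L^\infty}\|\px\ueb\|_{L^\infty}\|\px\ueb\|^2_{L^2(\R)}\le C_0\|\px\ueb\|^2_{L^2(\R)},$$
which, after Gagliardo--Nirenberg and Young, is absorbed into a small multiple of $B\eps^3\|\pxx\ueb\|^2_{L^2(\R)}$. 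The principal obstacle, and the reason why four parameters are needed rather than the three of \cite[Lemma~$3.2$]{Cd6}, is that each of $A,B,C,D$ must be set in a cascade so that every cross-term is strictly dominated by a previously fixed dissipation, while simultaneously keeping the coefficients of the good time derivatives positive.

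Once every cross-term has been absorbed, the identity reduces to a differential inequality $\frac{d}{dt}E(t)+P(t)\le C_0(1+E(t))$, with $E(t)$ the positive energy of the displayed time derivative and $P(t)$ the sum of the dissipations. Gronwall's inequality, combined with the initial-data bounds \eqref{eq:u0eps-14}, then yields $E(t)+\int_0^t P(s)\,ds \le C_0$ uniformly in $\eps,\beta$ and $t\in(0,T)$, which gives directly (i), every bound of (ii) except $\{\beta\eps\pxxx\ueb\}_{\eps,\beta}$, and every bound of (iii). The missing estimate is recovered via the Gagliardo--Nirenberg interpolation $\|\pxxx\ueb\|^2_{L^2(\R)}\le \|\pxx\ueb\|_{L^2(\R)}\|\pxxxx\ueb\|_{L^2(\R)}$ applied to the already-established $L^\infty((0,T);L^2(\R))$ bounds on $\beta\pxx\ueb$ and $\beta\pxxxx\ueb$, which in combination with $\beta=\mathcal{O}(\eps^4)$ supply the uniform control of $\beta\eps\pxxx\ueb$.
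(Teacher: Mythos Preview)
Your multiplier is exactly the one the paper uses (up to renaming the four constants), so the overall scheme is right. There is, however, a real gap in how you close the cubic term coming from $\ueb^3\cdot\beta\pxxx\ueb$. Your bound
\[
\Bigl|3\beta\!\int_{\R}\!\ueb(\px\ueb)^3\,dx\Bigr|\le 3\beta\,\norm{\ueb}_{L^\infty}\norm{\px\ueb}_{L^\infty}\norm{\px\ueb}^2_{L^2(\R)}\le C_0\norm{\px\ueb}^2_{L^2(\R)}
\]
is correct, but the outcome carries \emph{no} power of $\eps$. It cannot feed a Gronwall inequality $\frac{d}{dt}E\le C_0(1+E)$, since your energy contains only $\eps^2\norm{\px\ueb}^2_{L^2(\R)}$; you would in fact obtain $\frac{d}{dt}E\le C_0\eps^{-2}E$. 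Nor can it be absorbed into $B\eps^3\norm{\pxx\ueb}^2_{L^2(\R)}$ via Gagliardo--Nirenberg and Young: the inequality $\norm{\px\ueb}^2_{L^2(\R)}\le\norm{\ueb}_{L^2(\R)}\norm{\pxx\ueb}_{L^2(\R)}$ followed by Young leaves an $O(\eps^{-3})$ residual that does not integrate in time. The paper instead keeps the form $3\beta\int_{\R}\ueb^2|\px\ueb||\pxx\ueb|\,dx$, uses only $\norm{\ueb}^2_{L^\infty}\le C_0\beta^{-1/2}$, and writes $\beta\le D^2\eps^4$ with $D$ a free small parameter; then $\beta\norm{\ueb}^2_{L^\infty}\le C_0D\eps^2$ and Young gives
\[
3\beta\!\int_{\R}\!\ueb^2|\px\ueb||\pxx\ueb|\,dx\le \tfrac{3\eps}{2}\norm{\px\ueb}^2_{L^2(\R)}+C_0^2D^2\eps^3\norm{\pxx\ueb}^2_{L^2(\R)}.
\]
The first piece now carries an $\eps$ and is controlled after time-integration by \eqref{eq:l-2-u1}; the second is absorbed into the $\eps^3\norm{\pxx\ueb}^2$ dissipation provided $D$ is small enough. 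The other cubic cross-terms are dispatched the same way, the right-hand side collapses to $C_0\eps\norm{\px\ueb}^2_{L^2(\R)}$, and no Gronwall is needed.

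Two bookkeeping omissions also matter. Pairing $-B\eps^2\pxx\ueb$ with $\beta^2\ptxxxx\ueb$ contributes $\tfrac{B\beta^2\eps^2}{2}\tfrac{d}{dt}\norm{\pxxx\ueb}^2_{L^2(\R)}$ to the energy, which delivers the $L^\infty_tL^2_x$ bound on $\beta\eps\pxxx\ueb$ in~(ii) directly; your interpolation detour does not close, because your own energy only yields $\beta^2\norm{\pxxxx\ueb}_{L^2(\R)}\le C_0$, interpolation then gives merely $\beta^{3/2}\norm{\pxxx\ueb}_{L^2(\R)}\le C_0$, and $\eps\beta^{-1/2}$ is unbounded under \eqref{eq:beta-eps-2}. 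Pairing $-D\beta\eps\ptxx\ueb$ with $\beta^2\ptxxxx\ueb$ contributes the dissipation $D\beta^3\eps\norm{\ptxxx\ueb}^2_{L^2(\R)}$, without which the $\beta^{3/2}\eps^{1/2}\ptxxx\ueb$ bound in~(iii) is unreachable.
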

\begin{proof}
Let $0<t<T$. Let $A,\,B,\,C,\,E$ be some positive constants which will be specified later. Multiplying \eqref{eq:Ro-eps-beta} by
\begin{equation*}
\ueb^3 -A\eps^2\pxx\ueb -B\beta\eps\ptxx\ueb +C\eps\pt\ueb +E\beta^2\pxxxx\ueb,
\end{equation*}
we have
\begin{equation}
\label{eq:P1}
\begin{split}
&\left(\ueb^3 -A\eps^2\pxx\ueb -B\beta\eps\ptxx\ueb\right)\pt\ueb\\
&\qquad\quad +\left(C\eps\pt\ueb +E\beta^2\pxxxx\ueb\right)\pt\ueb\\
&\qquad\quad+2\left(\ueb^3 -A\eps^2\pxx\ueb -B\beta\eps\ptxx\ueb\right)\ueb\px\ueb\\
&\qquad\quad +2\left(C\eps\pt\ueb +E\beta^2\pxxxx\ueb\right)\ueb\px\ueb\\
&\qquad\quad +\beta\left(\ueb^3 -A\eps^2\pxx\ueb -B\beta\eps\ptxx\ueb\right)\pxxx\ueb\\
&\qquad\quad +\beta\left(C\eps\pt\ueb +E\beta^2\pxxxx\ueb\right)\pxxx\ueb\\
&\qquad\quad +\beta^2 \left(\ueb^3 -A\eps^2\pxx\ueb -B\beta\eps\ptxx\ueb\right)\ptxxxx\ueb\\
&\qquad\quad +\beta^2\left(C\eps\pt\ueb +E\beta^2\pxxxx\ueb\right)\ptxxxx\ueb\\
&\qquad= \eps\left(\ueb^3 -A\eps^2\pxx\ueb -B\beta\eps\ptxx\ueb\right)\pxx\ueb\\
&\qquad\quad +\eps \left(C\eps\pt\ueb +E\beta^2\pxxxx\ueb\right)\pxx\ueb.
\end{split}
\end{equation}
Since
\begin{align*}
\int_{\R}&\left(\ueb^3 -A\eps^2\pxx\ueb -B\beta\eps\ptxx\ueb\right)\pt\ueb dx\\
=&\frac{1}{4}\frac{d}{dt}\norm{\ueb(t,\cdot)}^4_{L^4(\R)} + \frac{A\eps^2}{2}\frac{d}{dt}\norm{\px\ueb(t,\cdot)}^2_{L^2(\R)} \\
& +B\beta\eps\norm{\ptx\ueb(t,\cdot)}^2_{L^2(\R)},\\
\int_{\R}&\left(C\eps\pt\ueb +E\beta^2\pxxxx\ueb\right)\pt\ueb dx\\
=& C\eps\norm{\pt\ueb(t,\cdot)}^2_{L^2(\R)}+ \frac{E\beta^2}{2}\frac{d}{dt}\norm{\pxx\ueb(t,\cdot)}^2_{L^2(\R)},\\
2\int_{\R}&\left(\ueb^3 -A\eps^2\pxx\ueb -B\beta\eps\ptxx\ueb\right)\ueb\px\ueb dx \\
=& -2A\eps^2\int_{\R}\ueb\px\ueb\pxx\ueb dx -2B\beta\eps\int_{\R}\ueb\px\ueb\ptxx\ueb dx,\\
2\int_{\R}&\left(C\eps\pt\ueb +E\beta^2\pxxxx\ueb\right)\ueb\px\ueb dx\\
=& 2C\int_{\R}\ueb\px\ueb\pt\ueb dx -2E\beta^2\int_{\R}(\px\ueb)^2\pxxx\ueb dx\\
&-2E\beta^2\int_{\R}\ueb\pxx\ueb\pxxx\ueb dx,\\
-2E\beta^2\int_{\R}&(\px\ueb)^2\pxxx\ueb dx-2E\beta^2\int_{\R}\ueb\pxx\ueb\pxxx\ueb dx\\
=&5E\beta^2\int_{\R}(\pxx\ueb)^2\px\ueb dx=-\frac{5E\beta^2}{2}\int_{\R}(\px\ueb)^2\pxxx\ueb dx,\\
2\int_{\R}&\left(C\eps\pt\ueb +E\beta^2\pxxxx\ueb\right)\ueb\px\ueb dx\\
=& 2C\eps\int_{\R}\ueb\px\ueb\pt\ueb dx-\frac{5E\beta^2}{2}\int_{\R}(\px\ueb)^2\pxxx\ueb dx,\\
\beta\int_{\R}&\left(\ueb^3 -A\eps^2\pxx\ueb -B\beta\eps\ptxx\ueb\right)\pxxx\ueb dx\\
=& -3\beta\int_{\R}\ueb^2\px\ueb\pxx\ueb dx - B\beta^2\eps\int_{\R}\ptxx\ueb\pxxx\ueb dx,\\
\beta\int_{\R}&\left(C\eps\pt\ueb +E\beta^2\pxxxx\ueb\right)\pxxx\ueb=C\beta\eps\int_{\R}\ptxx\ueb\px\ueb dx,\\
\beta^2\int_{\R}&\left(\ueb^3 -A\eps^2\pxx\ueb -B\beta\eps\ptxx\ueb\right)\ptxxxx\ueb dx\\
=& -3\beta^2 \int_{\R}\ueb^2\px\ueb \ptxxx\ueb dx +\frac{A\beta^2\eps^2}{2}\frac{d}{dt}\norm{\pxxx\ueb(t,\cdot)}^2_{L^2(\R)}\\
&+B\beta^3\eps\norm{\ptxxx\ueb(t,\cdot)}^2_{L^2(\R)}, \\
\beta^2\int_{\R}&\left(C\eps\pt\ueb +E\beta^2\pxxxx\ueb\right)\ptxxxx\ueb dx \\
=& C\beta^2\eps\norm{\ptxx\ueb(t,\cdot)}^2_{L^2(\R)} +\frac{E\beta^4}{2}\frac{d}{dt}\norm{\pxxxx\ueb(t,\cdot)}^2_{L^2(\R)},\\
\eps\int_{\R}&\left(\ueb^3 -A\eps^2\pxx\ueb -B\beta\eps\ptxx\ueb\right)\pxx\ueb dx\\
=& -3\eps\norm{\ueb(t,\cdot)\px\ueb(t,\cdot)}^2_{L^2(\R)} -A \eps^3\norm{\pxx\ueb(t,\cdot)}^2_{L^2(\R)}\\
& -\frac{B\beta\eps^2}{2}\frac{d}{dt}\norm{\pxx\ueb(t,\cdot)}^2_{L^2(\R)},\\
\eps\int_{\R}& \left(C\eps\pt\ueb +E\beta^2\pxxxx\ueb\right)\pxx\ueb dx\\
=& -\frac{C\eps^2}{2}\frac{d}{dt}\norm{\px\ueb(t,\cdot)}^2_{L^2(\R)} -E\beta^2\eps\norm{\pxxx\ueb(t,\cdot)}^2_{L^2(\R)}.
\end{align*}
 an integration on $\R$ of \eqref{eq:P1} gives
\begin{equation}
\label{eq:P9}
\begin{split}
&\frac{d}{dt}\left( \frac{1}{4}\norm{\ueb(t,\cdot)}^4_{L^4(\R)}+ \frac{\left(A+C\right)\eps^2}{2}\norm{\px\ueb(t,\cdot)}^2_{L^2(\R)}  \right)\\
&\qquad\quad+\frac{d}{dt}\left(\frac{A\beta^2\eps^2}{2}\norm{\pxxx\ueb(t,\cdot)}^2_{L^2(\R)}+ \frac{E\beta^4}{2}\norm{\pxxxx\ueb(t,\cdot)}^2_{L^2(\R)}\right)\\
&\qquad\quad +\frac{B\beta\eps^2+E\beta^2}{2}\frac{d}{dt}\norm{\pxx\ueb(t,\cdot)}^2_{L^2(\R)} +B\beta\eps\norm{\ptx\ueb(t,\cdot)}^2_{L^2(\R)}\\
&\qquad\quad +C\eps\norm{\pt\ueb(t,\cdot)}^2_{L^2(\R)}+B\beta^3\eps\norm{\ptxxx\ueb(t,\cdot)}^2_{L^2(\R)}\\
&\qquad\quad+C\beta^2\eps\norm{\ptxx\ueb(t,\cdot)}^2_{L^2(\R)} +3\eps\norm{\ueb(t,\cdot)\px\ueb(t,\cdot)}^2_{L^2(\R)}\\
&\qquad\quad +A \eps^3\norm{\pxx\ueb(t,\cdot)}^2_{L^2(\R)}+E\beta^2\eps\norm{\pxxx\ueb(t,\cdot)}^2_{L^2(\R)}\\
&\qquad= 2A\eps^2\int_{\R}\ueb\px\ueb\pxx\ueb dx +2B\beta\eps\int_{\R}\ueb\px\ueb\ptxx\ueb dx\\
&\qquad\quad +2C\eps\int_{\R}\ueb\px\ueb\pt\ueb dx+\frac{5E\beta^2}{2}\int_{\R}(\px\ueb)^2\pxxx\ueb dx\\
&\qquad\quad +3\beta\int_{\R}\ueb^2\px\ueb\pxx\ueb dx + B\beta^2\eps\int_{\R}\ptxx\ueb\pxxx\ueb dx\\
&\qquad\quad -C\beta\eps\int_{\R}\ptxx\ueb\px\ueb dx +3\beta^2 \int_{\R}\ueb^2\px\ueb \ptxxx\ueb dx.
\end{split}
\end{equation}
Due to the Young inequality,
\begin{align*}
&2A\eps^2\int_{\R}\vert\ueb\px\ueb\vert\vert\pxx\ueb\vert dx = \int_{\R}\left\vert \eps^{\frac{1}{2}}\ueb\px\ueb\right\vert\left\vert 2A \eps^{\frac{3}{2}}\pxx\ueb\right\vert dx\\
&\qquad\le \frac{\eps}{2}\norm{\ueb(t,\cdot)\px\ueb(t,\cdot)}^2_{L^2(\R)} + 2A^2\eps^3\norm{\pxx\ueb(t,\cdot)}^2_{L^2(\R)},\\
&2B\beta\eps\int_{\R}\ueb\px\ueb\ptxx\ueb dx = \eps\int_{\R}\left\vert\ueb\px\ueb\right\vert\left\vert 2B\beta\ptxx\ueb\right\vert dx\\
&\qquad \frac{\eps}{2}\norm{\ueb(t,\cdot)\px\ueb(t,\cdot)}^2_{L^2(\R)} + 4B^2\beta^2\eps\norm{\ptxx\ueb(t,\cdot)}^2_{L^2(\R)},\\
&2C\eps\int_{\R}\vert\ueb\px\ueb\vert\pt\ueb dx= \eps\int_{\R}\left\vert\ueb\px\ueb\right\vert\left\vert 2C\pt\ueb\right\vert dx\\
&\qquad \le \frac{\eps}{2}\norm{\ueb(t,\cdot)\px\ueb(t,\cdot)}^2_{L^2(\R)} +2C^2\eps\norm{\pt\ueb(t,\cdot)}^2_{L^2(\R)},\\
&B\beta^2\eps\int_{\R}\vert\ptxx\ueb\vert\vert\pxxx\ueb\vert dx= \beta^2\eps\int_{\R}\left\vert 2B\ptxx\ueb \right\vert\left\vert\frac{\pxxx\ueb}{2}\right\vert dx\\
&\qquad \le 4B^2\beta^2\eps\norm{\ptxx\ueb(t,\cdot)}^2_{L^2(\R)} +\frac{\beta^2\eps}{2}\norm{\pxxx\ueb(t,\cdot)}^2_{L^2(\R)},\\
&C\beta\eps\int_{\R}\vert\ptxx\ueb\vert\vert\px\ueb\vert dx = C\eps\int_{\R}\left\vert\beta\ptxx\ueb\right\vert\left\vert\px\ueb\right\vert dx\\
&\qquad\le \frac{C\beta^2\eps}{2}\norm{\ptxx\ueb(t,\cdot)}^2_{L^2(\R)}+\frac{C\eps}{2}\norm{\px\ueb(t,\cdot)}^2_{L^2(\R)}.
\end{align*}
Therefore, from \eqref{eq:P9}, we have
\begin{equation}
\label{eq:P11}
\begin{split}
&\frac{d}{dt}\left( \frac{1}{4}\norm{\ueb(t,\cdot)}^4_{L^4(\R)}+ \frac{\left(A+C\right)\eps^2}{2}\norm{\px\ueb(t,\cdot)}^2_{L^2(\R)}  \right)\\
&\qquad\quad+\frac{d}{dt}\left(\frac{A\beta^2\eps^2}{2}\norm{\pxxx\ueb(t,\cdot)}^2_{L^2(\R)}+ \frac{E\beta^4}{2}\norm{\pxxxx\ueb(t,\cdot)}^2_{L^2(\R)}\right)\\
&\qquad\quad +\frac{B\beta\eps^2+E\beta^2}{2}\frac{d}{dt}\norm{\pxx\ueb(t,\cdot)}^2_{L^2(\R)} +B\beta\eps\norm{\ptx\ueb(t,\cdot)}^2_{L^2(\R)}\\
&\qquad\quad +\left(1-2C\right)C\eps\norm{\pt\ueb(t,\cdot)}^2_{L^2(\R)} +B\beta^3\eps\norm{\ptxxx\ueb(t,\cdot)}^2_{L^2(\R)}\\
&\qquad\quad +\left(\frac{C}{2} -8B^2\right)\beta^2\eps\norm{\ptxx\ueb(t,\cdot)}^2_{L^2(\R)}+\frac{3\eps}{2}\norm{\ueb(t,\cdot)\px\ueb(t,\cdot)}^2_{L^2(\R)}\\
&\qquad\quad  +\left(A -2A^2\right)\eps^3\norm{\pxx\ueb(t,\cdot)}^2_{L^2(\R)}+\left(E-\frac{1}{2}\right)\beta^2\eps\norm{\pxxx\ueb(t,\cdot)}^2_{L^2(\R)}\\
&\qquad \le \frac{5E\beta^2}{2}\int_{\R}(\px\ueb)^2\vert\pxxx\ueb\vert dx + 3\beta\int_{\R}\ueb^2\vert\px\ueb\vert\vert\pxx\ueb\vert dx\\
&\qquad\quad \le 3\beta^2 \int_{\R}\ueb^2\vert\px\ueb\vert \vert\ptxxx\ueb\vert dx + \frac{C\eps}{2}\norm{\px\ueb(t,\cdot)}^2_{L^2(\R)}.
\end{split}
\end{equation}
From \eqref{eq:beta-eps-2}, we get
\begin{equation}
\label{eq:P12}
\beta\le D^2\eps^4,
\end{equation}
where $D$ is a positive constant that which will be specified later. It follows from \eqref{eq:Z46}, \eqref{eq:P12} and, the Young inequality that
\begin{align*}
&\frac{5E\beta^2}{2}\int_{\R}(\px\ueb)^2\vert\pxxx\ueb\vert dx=E\beta^2\int_{\R}\frac{5}{2\eps^{\frac{1}{2}}}(\px\ueb)^2\left\vert\eps^{\frac{1}{2}}\pxxx\ueb\right\vert dx\\
&\qquad\le \frac{25E\beta^2}{8}{\eps}\int_{\R}(\px\ueb)^4 dx  + \frac{E\beta^2\eps}{2}\norm{\pxxx\ueb(t,\cdot)}^2_{L^2(\R)}\\
&\qquad\le \frac{25E\beta^2}{8\eps}\norm{\px\ueb}^2_{L^{\infty}((0,T)\times\R)}\norm{\px\ueb(t,\cdot)}^2_{L^2(\R)}\\
&\qquad\quad + \frac{E\beta^2\eps}{2}\norm{\pxxx\ueb(t,\cdot)}^2_{L^2(\R)}\\
&\qquad\le \frac{C_{0}\beta^{\frac{1}{2}}}{\eps}\norm{\px\ueb(t,\cdot)}^2_{L^2(\R)} + \frac{E\beta^2\eps}{2}\norm{\pxxx\ueb(t,\cdot)}^2_{L^2(\R)}\\
&\qquad\le C_0D\norm{\px\ueb(t,\cdot)}^2_{L^2(\R)}+ \frac{E\beta^2\eps}{2}\norm{\pxxx\ueb(t,\cdot)}^2_{L^2(\R)},\\
&3\beta\int_{\R}\ueb^2\vert\px\ueb\vert\vert\pxx\ueb\vert dx \le 3\beta\norm{\ueb}^2_{L^{\infty}((0,T)\times\R)}\int_{\R}\vert\px\ueb\vert\vert\pxx\ueb\vert dx\\
&\qquad \le 3C_{0}D\eps^2\int_{\R}\vert\px\ueb\vert\vert\pxx\ueb\vert dx = 3\int_{\R}\left\vert\eps^{\frac{1}{2}}\px\ueb\right\vert \left\vert C_{0}D\eps^{\frac{3}{2}}\pxx\ueb\right\vert dx\\
&\qquad \le \frac{3\eps}{2}\norm{\px\ueb(t,\cdot)}^2_{L^2(\R)} + C^2_{0}D^2\eps^3\norm{\pxx\ueb(t,\cdot)}^2_{L^2(\R)},\\
&3\beta^2 \int_{\R}\ueb^2\vert\px\ueb\vert \vert\ptxxx\ueb\vert dx= \int_{\R}\left\vert\frac{3\beta^{\frac{1}{2}}\ueb^2\px\ueb}{\sqrt{B}\eps^{\frac{1}{2}}}\right\vert\left\vert\sqrt{B}\beta^{\frac{3}{2}}\eps^{\frac{1}{2}}\ptxxx\ueb\right\vert dx\\
&\qquad\le \frac{3\beta}{2B\eps}\int_{\R}\ueb^4(\px\ueb)^2 dx + \frac{B\beta^3\eps}{2}\norm{\ptxxx\ueb(t,\cdot)}^2_{L^2(\R)}\\
&\qquad\le \frac{3\beta}{2B\eps}\norm{\ueb}^2_{L^{\infty}((0,T)\times\R)}\norm{\ueb(t,\cdot)\px\ueb(t,\cdot)}^2_{L^2(\R)}\\
&\qquad\quad + \frac{B\beta^3\eps}{2}\norm{\ptxxx\ueb(t,\cdot)}^2_{L^2(\R)}\\
&\qquad\le \frac{C_{0}D\eps}{B}\norm{\ueb(t,\cdot)\px\ueb(t,\cdot)}^2_{L^2(\R)}+ \frac{B\beta^3\eps}{2}\norm{\ptxxx\ueb(t,\cdot)}^2_{L^2(\R)}.
\end{align*}
Then, it follows from \eqref{eq:P11} that
\begin{equation}
\label{eq:P15}
\begin{split}
&\frac{d}{dt}\left( \frac{1}{4}\norm{\ueb(t,\cdot)}^4_{L^4(\R)}+ \frac{\left(A+C\right)\eps^2}{2}\norm{\px\ueb(t,\cdot)}^2_{L^2(\R)}  \right)\\
&\qquad\quad+\frac{d}{dt}\left(\frac{A\beta^2\eps^2}{2}\norm{\pxxx\ueb(t,\cdot)}^2_{L^2(\R)}+ \frac{E\beta^4}{2}\norm{\pxxxx\ueb(t,\cdot)}^2_{L^2(\R)}\right)\\
&\qquad\quad +\frac{B\beta\eps^2+E\beta^2}{2}\frac{d}{dt}\norm{\pxx\ueb(t,\cdot)}^2_{L^2(\R)} +B\beta\eps\norm{\ptx\ueb(t,\cdot)}^2_{L^2(\R)}\\
&\qquad\quad +\left(1-2C\right)C\eps\norm{\pt\ueb(t,\cdot)}^2_{L^2(\R)} +\frac{B\beta^3\eps}{2}\norm{\ptxxx\ueb(t,\cdot)}^2_{L^2(\R)}\\
&\qquad\quad +\left(\frac{C}{2} -8B^2\right)\beta^2\eps\norm{\ptxx\ueb(t,\cdot)}^2_{L^2(\R)}+\left(E-1\right)\frac{\beta^2\eps}{2}\norm{\pxxx\ueb(t,\cdot)}^2_{L^2(\R)}\\
&\qquad\quad +\left(A -2A^2-C^2_0D^2\right)\eps^3\norm{\pxx\ueb(t,\cdot)}^2_{L^2(\R)}\\
&\qquad\quad+\left(\frac{3}{2}-\frac{C_0 D}{B}\right)\eps\norm{\ueb(t,\cdot)\px\ueb(t,\cdot)}^2_{L^2(\R)}
\le C_{0}\eps\norm{\px\ueb(t,\cdot)}^2_{L^2(\R)}.
\end{split}
\end{equation}
We search $A,\,B,\,C,\,E$ such that
\begin{equation*}
\begin{cases}
\displaystyle 1-2C >0,\\
\displaystyle \frac{C}{2} -8B^2 >0,\\
\displaystyle E-1>0,\\
\displaystyle A -2A^2-C^2_0D^2>0,\\
\displaystyle \frac{3}{2}-\frac{C_0 D}{B}>0,
\end{cases}
\end{equation*}
that is
\begin{equation}
\label{eq:P16}
\begin{cases}
\displaystyle C <\frac{1}{2},\\
\displaystyle B^2 < \frac{C}{16},\\
\displaystyle E>1,\\
\displaystyle 2A^2 -A +C^2_0D^2<0,\\
\displaystyle D<\frac{3B}{2C_{0}}.
\end{cases}
\end{equation}
We choose
\begin{equation}
\label{eq:P17}
C=\frac{1}{4},\quad E=2.
\end{equation}
It follows from the second inequality of \eqref{eq:P16}, and \eqref{eq:P17} that
\begin{equation*}
B<\frac{1}{8}.
\end{equation*}
Hence, we can choose
\begin{equation}
\label{eq:P18}
B=\frac{1}{9}.
\end{equation}
Substituting \eqref{eq:P18} in the fifth inequality of \eqref{eq:P16}, we have
\begin{equation}
\label{eq:P19}
D<\frac{1}{6C_0}.
\end{equation}
The fourth inequality admits solution when
\begin{equation}
\label{eq:P20}
D<\frac{2\sqrt{2}}{8C_0}.
\end{equation}
It follows from \eqref{eq:P19} and \eqref{eq:P20} that
\begin{equation}
\label{eq:P21}
D< \min\left\{\frac{1}{6C_0}, \frac{2\sqrt{2}}{8C_0}\right\}=\frac{1}{6C_0}.
\end{equation}
Therefore, from \eqref{eq:P16} and \eqref{eq:P21}, there exist $0<A_1<A_2$ such that
\begin{equation}
\label{eq:w1}
\quad 0<A_1<A<A_2.
\end{equation}
Substituting \eqref{eq:P17}, \eqref{eq:P18}, and \eqref{eq:P21} in \eqref{eq:P15}, from \eqref{eq:w1},  we get
\begin{align*}
&\frac{d}{dt}\left( \frac{1}{4}\norm{\ueb(t,\cdot)}^4_{L^4(\R)}+ \frac{\left(4A+1\right)\eps^2}{8}\norm{\px\ueb(t,\cdot)}^2_{L^2(\R)}  \right)\\
&\qquad\quad+\frac{d}{dt}\left(\frac{A\beta^2\eps^2}{2}\norm{\pxxx\ueb(t,\cdot)}^2_{L^2(\R)}+ \beta^4\norm{\pxxxx\ueb(t,\cdot)}^2_{L^2(\R)}\right)\\
&\qquad\quad +\frac{\beta\eps^2+18\beta^2}{18}\frac{d}{dt}\norm{\pxx\ueb(t,\cdot)}^2_{L^2(\R)} +\frac{\beta\eps}{9}\norm{\ptx\ueb(t,\cdot)}^2_{L^2(\R)}\\
&\qquad\quad +\frac{\eps}{8}\norm{\pt\ueb(t,\cdot)}^2_{L^2(\R)} +\frac{\beta^3\eps}{18}\norm{\ptxxx\ueb(t,\cdot)}^2_{L^2(\R)}\\
&\qquad\quad +\frac{73\beta^2\eps}{648}\norm{\ptxx\ueb(t,\cdot)}^2_{L^2(\R)}+\frac{\beta^2\eps}{2}\norm{\pxxx\ueb(t,\cdot)}^2_{L^2(\R)}\\
&\qquad\quad +K_2\eps^3\norm{\pxx\ueb(t,\cdot)}^2_{L^2(\R)}+K_2\eps\norm{\ueb(t,\cdot)\px\ueb(t,\cdot)}^2_{L^2(\R)}\\
&\qquad\le C_{0}\eps\norm{\px\ueb(t,\cdot)}^2_{L^2(\R)},
\end{align*}
for some $K_1,\,K_2>0$.

An integration on $(0,t)$, \eqref{eq:l-2-u1}, and \eqref{eq:u0eps-14} give
\begin{align*}
&\frac{1}{4}\norm{\ueb(t,\cdot)}^4_{L^4(\R)}+ \frac{\left(4A+1\right)\eps^2}{8}\norm{\px\ueb(t,\cdot)}^2_{L^2(\R)}\\
&\qquad\quad+\frac{A\beta^2\eps^2}{2}\norm{\pxxx\ueb(t,\cdot)}^2_{L^2(\R)}+ \beta^4\norm{\pxxxx\ueb(t,\cdot)}^2_{L^2(\R)}\\
&\qquad\quad +\frac{\beta\eps^2+18\beta^2}{18}\norm{\pxx\ueb(t,\cdot)}^2_{L^2(\R)} +\frac{\beta\eps}{9}\int_{0}^{t}\norm{\ptx\ueb(s,\cdot)}^2_{L^2(\R)}ds\\
&\qquad\quad +\frac{\eps}{8}\int_{0}^{t}\norm{\pt\ueb(s,\cdot)}^2_{L^2(\R)}ds +\frac{\beta^3\eps}{18}\int_{0}^{t}\norm{\ptxxx\ueb(s,\cdot)}^2_{L^2(\R)}ds\\
&\qquad\quad +\frac{73\beta^2\eps}{648}\int_{0}^{t}\norm{\ptxx\ueb(s,\cdot)}^2_{L^2(\R)}ds+\frac{\beta^2\eps}{2}\int_{0}^{t}\norm{\pxxx\ueb(s,\cdot)}^2_{L^2(\R)}ds\\
&\qquad\quad +K_2\eps^3\int_{0}^{t}\norm{\pxx\ueb(t,\cdot)}^2_{L^2(\R)}ds+K_2\eps\int_{0}^{t}\norm{\ueb(s,\cdot)\px\ueb(s,\cdot)}^2_{L^2(\R)}ds\\
&\qquad\le C_{0} +C_{0}\eps\int_{0}^{t}\norm{\px\ueb(s,\cdot)}^2_{L^2(\R)}ds \le C_0.
\end{align*}
Hence,
\begin{align*}
\norm{\ueb(t,\cdot)}_{L^4(\R)}\le & C_0,\\
\eps\norm{\px\ueb(t,\cdot)}_{L^2(\R)}\le &C_0,\\
\beta\eps\norm{\pxxx\ueb(t,\cdot)}_{L^2(\R)}\le &C_0,\\
\beta^2\norm{\pxxxx\ueb(t,\cdot)}_{L^2(\R)}\le &C_0,\\
\beta^{\frac{1}{2}}\eps\norm{\pxx\ueb(t,\cdot)}_{L^2(\R)}\le &C_0,\\
\beta\norm{\pxx\ueb(t,\cdot)}_{L^2(\R)}\le &C_0,\\
\beta\eps\int_{0}^{t}\norm{\ptx\ueb(s,\cdot)}^2_{L^2(\R)}ds\le &C_0,\\
\eps\int_{0}^{t}\norm{\pt\ueb(s,\cdot)}^2_{L^2(\R)}ds\le &C_0,\\
\beta^3\eps\int_{0}^{t}\norm{\ptxxx\ueb(s,\cdot)}^2_{L^2(\R)}ds\le &C_0,\\
\beta^2\eps\int_{0}^{t}\norm{\ptxx\ueb(s,\cdot)}^2_{L^2(\R)}ds\le &C_0,\\
\beta^2\eps\int_{0}^{t}\norm{\pxxx\ueb(s,\cdot)}^2_{L^2(\R)}ds\le &C_0,\\
\eps^3\int_{0}^{t}\norm{\pxx\ueb(t,\cdot)}^2_{L^2(\R)}ds\le &C_0,\\
\eps\int_{0}^{t}\norm{\ueb(s,\cdot)\px\ueb(s,\cdot)}^2_{L^2(\R)}ds\le &C_0,
\end{align*}
for every $0<t<T$.
\end{proof}
We are ready for the proof of Theorem \ref{th:main-13}.
\begin{proof}[Proof of Theorem \ref{th:main-13}.]
Let us consider a compactly supported entropy--entropy flux pair $(\eta,\,q)$. Multiplying \eqref{eq:Ro-eps-beta} by $\eta'(\ueb)$, we have
\begin{align*}
\pt\eta(\ueb) + \px q(\ueb) =&\eps \eta'(\ueb) \pxx\ueb-\beta\pxxx\ueb -\beta^2\eta'(\ueb)\ptxxxx\ueb \\
=& I_{1,\,\eps,\,\beta}+I_{2,\,\eps,\,\beta}+ I_{3,\,\eps,\,\beta} + I_{4,\,\eps,\,\beta}+ I_{5,\,\eps,\,\beta} + I_{6,\,\eps,\,\beta}
\end{align*}
where $I_{1,\,\eps,\,\beta},\,I_{2,\,\eps,\,\beta},\, I_{3,\,\eps,\,\beta},\, I_{4,\,\eps,\,\beta},\,I_{5,\,\eps,\,\beta},\,I_{6,\,\eps,\,\beta}$ are defined in \eqref{eq:12000}.

As in \cite[Theorem $3.1$]{Cd6}, we obtain that $I_{1,\,\eps,\,\beta}\to 0$ in $H^{-1}((0,T)\times\R)$, $\{I_{2,\,\eps,\,\beta}\}_{\eps,\beta>0}$ is bounded in $L^1((0,T)\times\R)$, $I_{4,\,\eps,\,\beta}\to 0$ in $H^{-1}((0,T)\times\R)$, $I_{5,\,\eps,\,\beta}\to 0$ in $L^1((0,T)\times\R)$, while as in \cite[Theorem $2.1$]{Cd5} $I_{3,\,\eps,\,\beta}\to 0$ in $H^{-1}((0,T)\times\R)$, and, $I_{4,\,\eps,\,\beta}\to 0$ in $L^1((0,T)\times\R)$

Arguing in \cite[Theorem $2.1$]{Cd5}, we have \eqref{eq:u-entro-sol-12}.
\end{proof}

\appendix
\section{The Benjamin-Bona-Mahony equation}\label{appen1}
In this appendix, we consider The Benjamin-Bona-Mahony equation
\begin{equation}
\label{eq:BBM}
\pt u +u\px u -\beta\ptxx u =0.
\end{equation}
We augment \eqref{eq:BBM} with the initial condition
\begin{equation}
u(0,x)=u_{0}(x),
\end{equation}
on which we assume \eqref{eq:uo-l2}
We study the dispersion-diffusion limit for \eqref{eq:BBM}. Therefore, we fix two small numbers $\eps,\,\beta$ and consider the following third order problem
\begin{equation}
\label{eq:AB2}
\begin{cases}
\pt\ueb+ \ueb\px \ueb -\beta\ptxx\ueb =\eps\pxx\ueb, &\qquad t>0, \ x\in\R ,\\
\ueb(0,x)=u_{\eps,\beta,0}(x), &\qquad x\in\R,
\end{cases}
\end{equation}
where $u_{\eps,\beta,0}$ is a $C^\infty$ approximation of $u_{0}$ such that
\begin{equation}
\begin{split}
\label{eq:AB3}
&u_{\eps,\,\beta,\,0} \to u_{0} \quad  \textrm{in $L^{p}_{loc}(\R)$, $1\le p < 2$, as $\eps,\,\beta \to 0$,}\\
&\norm{u_{\eps,\beta, 0}}^2_{L^2(\R)}+\left(\beta+\beta^{\frac{1}{2}}\right)\norm{\px u_{\eps,\beta,0}}^2_{L^2(\R)}\le C_0,\quad \eps,\beta >0,  \\
&\left(\beta^{\frac{3}{2}}+\beta\eps^2\right)\norm{\pxx u_{\eps,\beta,0}}^2_{L^2(\R)}\le C_0,\quad \eps,\beta >0,
\end{split}
\end{equation}
and $C_0$ is a constant independent on $\eps$ and $\beta$.

The main result of this section is the following theorem.
\begin{theorem}
\label{th:main-A2}
Assume that \eqref{eq:uo-l2} and \eqref{eq:AB3} hold. If \eqref{eq:beta-eps-2} holds,
then, there exist two sequences $\{\eps_{n}\}_{n\in\N}$, $\{\beta_{n}\}_{n\in\N}$, with $\eps_n, \beta_n \to 0$, and a limit function
\begin{equation*}
u\in L^{\infty}(\R^{+}; L^2(\R)),
\end{equation*}
such that
\begin{itemize}
\item[$i)$] $u_{\eps_n, \beta_n}\to u$  strongly in $L^{p}_{loc}(\R^{+}\times\R)$, for each $1\le p <2$,
\item[$ii)$] $u$ a distributional solution of \eqref{eq:BU}.
\end{itemize}
Moreover, if \eqref{eq:beta-eps-4} holds
\begin{itemize}
\item[$iii)$] $u$ is the unique entropy solution of \eqref{eq:BU}.
\end{itemize}
\end{theorem}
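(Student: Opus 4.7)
The plan is to follow the scheme of Theorem \ref{th:main-1}, adapted to the simpler, lower-order problem \eqref{eq:AB2}. First I would derive the basic $L^2$ estimate by multiplying \eqref{eq:AB2} by $\ueb$ and integrating in $x$: since $\int_{\R}\ueb^2\px\ueb\,dx=0$ and $\int_{\R}\ueb\,\ptxx\ueb\,dx=-\tfrac{1}{2}\tfrac{d}{dt}\norm{\px\ueb(t,\cdot)}_{L^2(\R)}^2$, this yields
\begin{equation*}
\norm{\ueb(t,\cdot)}_{L^2(\R)}^2 + \beta\norm{\px\ueb(t,\cdot)}_{L^2(\R)}^2 + 2\eps\int_0^t\norm{\px\ueb(s,\cdot)}_{L^2(\R)}^2\,ds \le C_0.
\end{equation*}
Multiplying next by a suitable combination of $-\beta^{1/2}\pxx\ueb$ and $B\eps\pt\ueb$, as in Lemma \ref{lm:50}, and absorbing the nonlinear cross-terms via the Young inequality, I would obtain weighted $L^\infty_tL^2_x$ bounds on $\pxx\ueb$ together with matching $L^2_{t,x}$ bounds on $\ptx\ueb$ and $\pt\ueb$. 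Combining the one-dimensional Sobolev inequality $\ueb^2(t,x)\le 2\norm{\ueb(t,\cdot)}_{L^2(\R)}\norm{\px\ueb(t,\cdot)}_{L^2(\R)}$ with these weighted estimates and the scaling \eqref{eq:beta-eps-2} then produces a uniform $L^\infty$ bound on $\ueb$.

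Once the a priori estimates are in hand, I would run the compensated compactness step of Lemma \ref{lm:259}. For a compactly supported entropy--entropy flux pair $(\eta,q)$, multiplying \eqref{eq:AB2} by $\eta'(\ueb)$ gives
\begin{align*}
\pt\eta(\ueb)+\px q(\ueb) =\,& \px\!\bigl(\eps\eta'(\ueb)\px\ueb\bigr)-\eps\eta''(\ueb)(\px\ueb)^2\\
& +\px\!\bigl(\beta\eta'(\ueb)\ptx\ueb\bigr)-\beta\eta''(\ueb)\px\ueb\,\ptx\ueb.
\end{align*}
The two divergence terms tend to zero in $H^{-1}((0,T)\times\R)$ thanks to the parabolic bound on $\eps^{1/2}\px\ueb$ and the dispersive bound on $\beta^{1/2}\eps^{1/2}\ptx\ueb$, while the two pointwise terms are bounded in $L^1((0,T)\times\R)$. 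Lemma \ref{lm:1} then furnishes $H^{-1}_{loc}$ compactness of $\pt\eta(\ueb)+\px q(\ueb)$, and the $L^p$ compensated compactness method of \cite{SC} extracts subsequences $\{\eps_n\}_{n\in\N}$, $\{\beta_n\}_{n\in\N}$ along which $\ueb\to u$ strongly in $L^p_{loc}$ for every $1\le p<2$; passing to the limit in the distributional formulation of \eqref{eq:AB2} proves parts $(i)$ and $(ii)$.

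For part $(iii)$, following \cite{LN} and the argument of Lemma \ref{lm:452}, under the stronger scaling \eqref{eq:beta-eps-4} I would in addition show that the cross term $\beta\eta''(\ueb)\px\ueb\,\ptx\ueb$ converges to zero in $L^1$, by splitting $\beta=\beta^{1/2}\cdot\beta^{1/2}$ and redistributing the weights so that one factor is bounded in the right space and the remaining prefactor is $o(1)$ under \eqref{eq:beta-eps-4}. Together with the nonpositive sign of the entropy-dissipation term $-\eps\eta''(\ueb)(\px\ueb)^2$, this yields $\pt\eta(u)+\px q(u)\le 0$ in $\mathcal{D}'$ for every convex $\eta$, and Kru\v{z}kov's uniqueness theorem then identifies $u$ as the unique entropy solution of \eqref{eq:BU}. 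The hard part will be the calibration of the weights in the a priori estimates: the mixed derivative $\ptx\ueb$ appears both in a divergence term (where one wants $H^{-1}$ convergence to zero) and in a quadratic term (where one wants $L^1$ boundedness under \eqref{eq:beta-eps-2} and $L^1$ convergence to zero under \eqref{eq:beta-eps-4}), so the weighted $L^2$ bounds on $\ptx\ueb$ must be tuned so that a single choice of constant $B$ makes both conclusions possible, mirroring the delicate balance of Lemma \ref{lm:50} but simplified by the absence of the higher-order dispersive operator $\ptxxxx$.
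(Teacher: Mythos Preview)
Your strategy matches the paper's, and the entropy decomposition into the four terms $I_{1,\eps,\beta},\dots,I_{4,\eps,\beta}$ is exactly what the appendix does. Two points need correction, though neither is fatal.

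First, the $L^\infty$ bound is \emph{not} uniform: it is $\norm{\ueb(t,\cdot)}_{L^\infty(\R)}\le C_0\beta^{-1/4}$, and for the BBM equation it comes for free from the basic energy identity itself, since $-\beta\int_\R\ueb\,\ptxx\ueb\,dx=\tfrac{\beta}{2}\tfrac{d}{dt}\norm{\px\ueb}_{L^2}^2$ already puts $\beta\norm{\px\ueb(t,\cdot)}_{L^2}^2$ into the first estimate. There is no need to pass through the weighted higher-order estimates to obtain it; the paper records this as Lemma~\ref{lm:N23}. The $\beta^{-1/4}$ growth is precisely why compactly supported entropies and Schonbek's $L^p$ compensated compactness are needed, which you do correctly invoke.

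Second, the weight you claim on $\ptx\ueb$ is too strong. The paper multiplies \eqref{eq:AB2} by $-2\beta^{1/2}\pxx\ueb-\beta\eps\,\ptxx\ueb$ rather than your $-\beta^{1/2}\pxx\ueb+B\eps\,\pt\ueb$; either choice produces the good term $\beta\eps\norm{\ptx\ueb}_{L^2}^2$, but in both cases the cross term $\beta^{1/2}\int_\R\ueb\px\ueb\pxx\ueb\,dx$, estimated via $\norm{\ueb}_{L^\infty}^2\le C_0\beta^{-1/2}$ and Young's inequality, leaves a right-hand side of order $C_0\eps\beta^{-1/2}\norm{\px\ueb}_{L^2}^2$. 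After time integration this is $O(\beta^{-1/2})$, so one must multiply through by $\beta^{1/2}$, giving only $\beta^{3/2}\eps\int_0^t\norm{\ptx\ueb}_{L^2}^2\,ds\le C_0$; the bounded family is $\{\beta^{3/4}\eps^{1/2}\ptx\ueb\}$, not $\{\beta^{1/2}\eps^{1/2}\ptx\ueb\}$. This weaker weight still suffices: one gets $\norm{\beta\ptx\ueb}_{L^2}^2=(\beta^{1/2}/\eps)\cdot\beta^{3/2}\eps\norm{\ptx\ueb}_{L^2}^2\le C_0\eps\to 0$ for $I_{3,\eps,\beta}$, and $\beta\norm{\px\ueb}_{L^2}\norm{\ptx\ueb}_{L^2}\le (\beta^{1/4}/\eps)(\eps^{1/2}\norm{\px\ueb}_{L^2})(\beta^{3/4}\eps^{1/2}\norm{\ptx\ueb}_{L^2})\le C_0\beta^{1/4}/\eps$ for $I_{4,\eps,\beta}$, which is $O(1)$ under \eqref{eq:beta-eps-2} and $o(1)$ under \eqref{eq:beta-eps-4}, exactly as your outline requires.
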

Let us prove some a priori estimates on $\ueb$, denoting with $C_0$ the constants which depend only on the initial data.

Arguing as \cite{SC}, we have the following result
\begin{lemma}\label{lm:N23}
For each $t>0$,
\begin{equation}
\label{eq:AB31}
\norm{\ueb(t,\cdot)}^2_{L^2(\R)} + \beta\norm{\px\ueb(t,\cdot)}^2_{L^2(\R)} + 2\eps\int_{0}^{t}\norm{\px\ueb(t,\cdot)}^2_{L^2(\R)} \le C_0.
\end{equation}
Moreover,
\begin{equation}
\label{eq:AB2*}
\norm{\ueb(t,\cdot)}_{L^{\infty}(\R)}\le C_{0}\beta^{-\frac{1}{4}}.
\end{equation}
\end{lemma}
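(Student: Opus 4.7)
The plan is to obtain the $L^2$ energy identity by testing \eqref{eq:AB2} against $\ueb$ itself, and then bootstrap to the $L^\infty$ bound via the standard one-dimensional Sobolev embedding $\|v\|_{L^\infty}^2\le 2\|v\|_{L^2}\|\px v\|_{L^2}$.

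First I would multiply the equation \eqref{eq:AB2} by $\ueb$ and integrate over $\R$. Three integrations by parts handle the various terms:
\begin{align*}
\int_{\R}\ueb\,\pt\ueb\,dx &= \tfrac{1}{2}\tfrac{d}{dt}\norm{\ueb(t,\cdot)}^2_{L^2(\R)},\\
\int_{\R}\ueb^2\px\ueb\,dx &= \tfrac{1}{3}\int_{\R}\px(\ueb^3)\,dx=0,\\
-\beta\int_{\R}\ueb\,\ptxx\ueb\,dx &= \beta\int_{\R}\px\ueb\,\ptx\ueb\,dx=\tfrac{\beta}{2}\tfrac{d}{dt}\norm{\px\ueb(t,\cdot)}^2_{L^2(\R)},\\
\eps\int_{\R}\ueb\,\pxx\ueb\,dx &= -\eps\norm{\px\ueb(t,\cdot)}^2_{L^2(\R)}.
\end{align*}
Adding these yields the pointwise-in-time identity
\[
\tfrac{1}{2}\tfrac{d}{dt}\bigl(\norm{\ueb(t,\cdot)}^2_{L^2(\R)}+\beta\norm{\px\ueb(t,\cdot)}^2_{L^2(\R)}\bigr)+\eps\norm{\px\ueb(t,\cdot)}^2_{L^2(\R)}=0.
\]
Integrating on $(0,t)$ and invoking the second line of \eqref{eq:AB3} to bound the initial data by $C_0$ produces \eqref{eq:AB31}.

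For \eqref{eq:AB2*} I would use the standard representation
\[
\ueb^2(t,x)=2\int_{-\infty}^{x}\ueb(t,y)\px\ueb(t,y)\,dy\le 2\norm{\ueb(t,\cdot)}_{L^2(\R)}\norm{\px\ueb(t,\cdot)}_{L^2(\R)},
\]
obtained from Cauchy--Schwarz after writing $\ueb^2$ as the integral of its derivative (using that $\ueb(t,\cdot)\in H^1(\R)$ decays at infinity). Plugging in the two bounds $\norm{\ueb(t,\cdot)}_{L^2(\R)}\le C_0$ and $\norm{\px\ueb(t,\cdot)}_{L^2(\R)}\le C_0\beta^{-1/2}$ supplied by \eqref{eq:AB31} gives $\ueb^2(t,x)\le C_0\beta^{-1/2}$, i.e.\ the desired $\beta^{-1/4}$ estimate.

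There is no real obstacle here: both the dispersive BBM term $-\beta\ptxx\ueb$ and the viscous term $\eps\pxx\ueb$ interact harmoniously with the multiplier $\ueb$, and the nonlinearity $\ueb\px\ueb$ is a pure divergence, so no nonlinear control beyond $L^2$ is required. The only point to remark on is that the $L^\infty$ bound degenerates as $\beta\to 0$, which is exactly the typical behavior and is acceptable for the compensated-compactness framework used later.
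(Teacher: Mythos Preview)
Your argument is correct and is precisely the standard $L^2$ energy estimate plus one-dimensional Sobolev embedding that the paper defers to by citing \cite{SC}. There is nothing to add: the multiplier $\ueb$ yields the exact identity, and the $L^\infty$ bound follows immediately from \eqref{eq:AB31} as you wrote.
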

\begin{lemma}
Assume \eqref{eq:beta-eps-2}. For each $t>0$,
\begin{equation}
\label{eq:BC3}
\begin{split}
&\beta\norm{\px\ueb(t,\cdot)}^2_{L^2(\R)}+\frac{2\beta^2 + \beta^{\frac{3}{2}}\eps^2}{2} \norm{\pxx\ueb(t,\cdot)}^2_{L^2(\R)}\\
&\qquad\quad + \frac{3\beta\eps}{2}\int_{0}^{t}\norm{\pxx\ueb(s,\cdot)}^2_{L^2(\R)} +\frac{\beta^{\frac{5}{2}}\eps}{2}\int_{0}^{t}\norm{\ptxx\ueb(s,\cdot)}^2_{L^2(\R)}ds\\
&\qquad\quad +\beta^{\frac{3}{2}}\eps\norm{\ptx\ueb(t,\cdot)}^2_{L^2(\R)}\le C_0.
\end{split}
\end{equation}
\end{lemma}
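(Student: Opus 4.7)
The plan is to test the viscous BBM equation \eqref{eq:AB2} against a multiplier of the form $-\beta\pxx\ueb - A\beta^{3/2}\eps\,\ptxx\ueb$, where $A>0$ is a constant to be fixed later, and then absorb the resulting nonlinear remainders using the bounds of Lemma \ref{lm:N23} together with the smallness hypothesis \eqref{eq:beta-eps-2}. The two pieces of the multiplier are chosen so that integration by parts in $x$ produces exactly the combination of energy and dissipation terms appearing on the left-hand side of \eqref{eq:BC3}.

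Integrating the first piece $-\beta\pxx\ueb$ against each linear term generates, respectively from $\pt\ueb$, $-\beta\ptxx\ueb$, and $\eps\pxx\ueb$, the contributions $\frac{\beta}{2}\frac{d}{dt}\norm{\px\ueb}^2_{L^2(\R)}$, $\frac{\beta^2}{2}\frac{d}{dt}\norm{\pxx\ueb}^2_{L^2(\R)}$, and $\beta\eps\norm{\pxx\ueb}^2_{L^2(\R)}$. The second piece $-A\beta^{3/2}\eps\ptxx\ueb$ analogously yields $A\beta^{3/2}\eps\norm{\ptx\ueb}^2_{L^2(\R)}$, $A\beta^{5/2}\eps\norm{\ptxx\ueb}^2_{L^2(\R)}$, and $\frac{A\beta^{3/2}\eps^2}{2}\frac{d}{dt}\norm{\pxx\ueb}^2_{L^2(\R)}$. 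Using the elementary identity
\[
\int_{\R}\ueb\px\ueb\pxx\ueb\,dx=-\frac{1}{2}\int_{\R}(\px\ueb)^3\,dx,
\]
the convective contribution against $-\beta\pxx\ueb$ collapses to $\frac{\beta}{2}\int_{\R}(\px\ueb)^3\,dx$, while an additional integration by parts in $x$ turns the convective contribution against $-A\beta^{3/2}\eps\ptxx\ueb$ into the perfect time derivative $-\frac{A\beta^{3/2}\eps}{3}\frac{d}{dt}\int_{\R}(\px\ueb)^3\,dx$ (which joins the energy on the left-hand side) plus a remainder of the form $-A\beta^{3/2}\eps\int_{\R}\ueb\,\pxx\ueb\,\ptx\ueb\,dx$.

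The next step is to bound the two nonlinear remainders. Combining the Gagliardo--Nirenberg inequality $\norm{\px\ueb}^2_{L^\infty(\R)}\le 2\norm{\px\ueb}_{L^2(\R)}\norm{\pxx\ueb}_{L^2(\R)}$ with the estimates $\norm{\ueb}_{L^\infty(\R)}\le C_0\beta^{-1/4}$ and $\beta\norm{\px\ueb}^2_{L^2(\R)}\le C_0$ from Lemma \ref{lm:N23} gives $\bigl|\int(\px\ueb)^3\,dx\bigr|\le \sqrt{2}\,\norm{\px\ueb}^{5/2}_{L^2(\R)}\norm{\pxx\ueb}^{1/2}_{L^2(\R)}$. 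Integrating in time on $(0,t)$ and applying H\"older followed by Young's inequality with exponents $(4/3,4)$, tuned so that the second factor becomes $\beta\eps\int_{0}^{t}\norm{\pxx\ueb}^2_{L^2(\R)}\,ds$, yields an absorbable term plus a residual which is uniformly bounded thanks to the scaling relation $\beta=\mathcal O(\eps^4)$. The mixed remainder $A\beta^{3/2}\eps\int\ueb\,\pxx\ueb\,\ptx\ueb\,dx$ is handled by Cauchy--Schwarz, using the $L^\infty$ bound on $\ueb$ together with the newly produced dissipations $\beta\eps\int_0^t\norm{\pxx\ueb}^2_{L^2(\R)}$ and $A\beta^{3/2}\eps\int_0^t\norm{\ptx\ueb}^2_{L^2(\R)}$. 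Choosing $A$ sufficiently small and integrating from $0$ to $t$, the initial-data bound \eqref{eq:AB3} closes the argument and yields \eqref{eq:BC3}.

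The main obstacle will be precisely the balancing of exponents in these Young inequalities. Since both $\norm{\ueb}_{L^\infty(\R)}$ and $\norm{\px\ueb}_{L^2(\R)}$ degenerate as $\beta\to 0$, any naive application of Young's inequality produces negative powers of $\beta$ and $\eps$ that are not uniformly controlled. It is only the hypothesis $\beta=\mathcal O(\eps^4)$ that makes the critical exponents cancel, in the same spirit as the delicate choices of $A,B$ in Lemma \ref{lm:50} and $A,B,C,E,D$ in Lemma \ref{lm:t3}.
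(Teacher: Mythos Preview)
Your multiplier differs from the paper's choice $-2\beta^{1/2}\pxx\ueb-\beta\eps\ptxx\ueb$ only by an overall factor of $\beta^{1/2}$ and the free constant $A$, so the linear structure of the two arguments is identical. The real divergence is in the treatment of the convective terms. The paper never integrates by parts: it keeps both remainders in the form $\int\ueb\,\px\ueb\cdot(\ldots)\,dx$, applies one Young inequality to each, pulls out $\|\ueb\|_{L^\infty}^2\le C_0\beta^{-1/2}$, and then uses $\beta^{1/2}\le C\eps^2$ (from $\beta=\mathcal O(\eps^4)$) so that everything is bounded by $\dfrac{C_0\eps}{\beta^{1/2}}\|\px\ueb\|^2$. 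After multiplying the whole differential inequality by $\beta^{1/2}$, this integrates to $C_0$ by Lemma~\ref{lm:N23}. No Gagliardo--Nirenberg, no H\"older--in--time step, no cubic energy term, and no constant to tune.

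Your route can be completed, but as written it has a gap. You put the ``perfect time derivative'' $\dfrac{A\beta^{3/2}\eps}{3}\dfrac{d}{dt}\int(\px\ueb)^3\,dx$ into the left-hand energy, but this cubic term has no sign, and after integrating on $(0,t)$ you must show that $\dfrac{A\beta^{3/2}\eps}{3}\bigl|\int(\px\ueb(t))^3\,dx\bigr|$ does not destroy coercivity. The Gagliardo--Nirenberg bound you invoke, $|\int(\px\ueb)^3|\lesssim\|\px\ueb\|^{5/2}\|\pxx\ueb\|^{1/2}$, combined with absorption into the leading energies $\beta\|\px\ueb\|^2$ and $\beta^2\|\pxx\ueb\|^2$, forces the \emph{reverse} inequality $\eps^4\lesssim\beta$ and fails. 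The way out is to rewrite $\int(\px\ueb)^3=-2\int\ueb\,\px\ueb\,\pxx\ueb$, insert $\|\ueb\|_{L^\infty}\le C_0\beta^{-1/4}$ and $\|\px\ueb\|\le C_0\beta^{-1/2}$ from Lemma~\ref{lm:N23}, and absorb into the \emph{secondary} energy $\dfrac{A\beta^{3/2}\eps^2}{2}\|\pxx\ueb(t)\|^2$, which leaves only an $O(A)$ constant. But that extra step is exactly the paper's direct $L^\infty$ trick, so it is simpler to use it on the original remainders from the outset and avoid the cubic energy altogether.
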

\begin{proof}
Let $t>0$. Multiplying \eqref{eq:AB2} by $-2\beta^{\frac{1}{2}}\pxx\ueb-\beta\eps\ptxx\ueb$, we have
\begin{equation}
\label{eq:AB3*}
\begin{split}
\left(-2\beta^{\frac{1}{2}}\pxx\ueb-\beta\eps\ptxx\ueb\right)\pt\ueb&+ \left(-2\beta^{\frac{1}{2}}\pxx\ueb-\beta\eps\ptxx\ueb\right)\ueb\px\ueb\\
&-\beta\left(-2\beta^{\frac{1}{2}}\pxx\ueb-\beta\eps\ptxx\ueb\right)\ptxx\ueb\\
=&\eps\left(-2\beta^{\frac{1}{2}}\pxx\ueb-\beta\eps\ptxx\ueb\right)\pxx\ueb.
\end{split}
\end{equation}
Since
\begin{align*}
&\int_{\R} \left(-2\beta^{\frac{1}{2}}\pxx\ueb-\beta\eps\ptxx\ueb\right)\pt\ueb dx\\
&\qquad =\beta^{\frac{1}{2}}\frac{d}{dt}\norm{\px\ueb(t,\cdot)}^2_{L^2(\R)} +\beta\eps\norm{\ptx\ueb(t,\cdot)}^2_{L^2(\R)},\\
&-\beta\int_{\R}\left(-2\beta^{\frac{1}{2}}\pxx\ueb-\beta\eps\ptxx\ueb\right)\ptxx\ueb dx \\
&\qquad=\beta^{\frac{3}{2}}\frac{d}{dt}\norm{\pxx\ueb(t,\cdot)}^2_{L^2(\R)}+\beta^2\eps\norm{\ptxx\ueb(t,\cdot)}^2_{L^2(\R)},\\
&\eps\int_{\R}\left(-2\beta^{\frac{1}{2}}\pxx\ueb-\beta\eps\ptxx\ueb\right)\pxx\ueb dx\\
&\qquad = -2\beta^{\frac{1}{2}}\eps\norm{\pxx\ueb(t,\cdot)}^2_{L^2(\R)}-\frac{\beta\eps^2}{2}\frac{d}{dt}\norm{\pxx\ueb(t,\cdot)}^2_{L^2(\R)},
\end{align*}
integrating \eqref{eq:AB3*} on $\R$, we get
\begin{equation}
\label{eq:AB4}
\begin{split}
&\frac{d}{dt}\left(\beta^{\frac{1}{2}}\norm{\px\ueb(t,\cdot)}^2_{L^2(\R)}+\frac{2\beta^{\frac{3}{2}} + \beta\eps^2}{2} \norm{\pxx\ueb(t,\cdot)}^2_{L^2(\R)}\right)\\
&\qquad +2\beta^{\frac{1}{2}}\eps\norm{\pxx\ueb(t,\cdot)}^2_{L^2(\R)} +\beta^2\eps\norm{\ptxx\ueb(t,\cdot)}^2_{L^2(\R)}\\
&\qquad +\beta\eps\norm{\ptx\ueb(t,\cdot)}^2_{L^2(\R)}\\
&\qquad= 2\beta^{\frac{1}{2}}\int_{\R}\ueb\px\ueb\pxx\ueb dx -\beta\eps\int_{\R}\ueb\px\ueb\ptxx\ueb dx.
\end{split}
\end{equation}
Due to \eqref{eq:beta-eps-2}, \eqref{eq:AB2*}, and the Young inequality,
\begin{equation}
\label{eq:AB5}
\begin{split}
&2\beta^{\frac{1}{2}}\int_{\R}\vert\ueb\px\ueb\vert\vert\pxx\ueb\vert dx =\beta^{\frac{1}{2}}\int_{\R}\left\vert\frac{2\ueb\px\ueb}{\eps^{\frac{1}{2}}}\right\vert\left\vert \eps^{\frac{1}{2}}\pxx\ueb\right\vert dx \\
&\qquad \le \frac{2\beta^{\frac{1}{2}}}{\eps}\int_{\R}\ueb^2(\px\ueb)^2 dx +\frac{\beta^{\frac{1}{2}}\eps}{2}\norm{\pxx\ueb(t,\cdot)}^2_{L^2(\R)}\\
&\qquad \le C_{0}\eps\int_{\R}\ueb^2(\px\ueb)^2 dx + \frac{\beta^{\frac{1}{2}}\eps}{2}\norm{\pxx\ueb(t,\cdot)}^2_{L^2(\R)}\\
&\qquad \le C_{0}\eps\norm{\ueb(t,\cdot)}^2_{L^{\infty}(\R)}\norm{\px\ueb(t,\cdot)}^2_{L^2(\R)} + \frac{\beta^{\frac{1}{2}}\eps}{2}\norm{\pxx\ueb(t,\cdot)}^2_{L^2(\R)}\\
&\qquad\le\frac{C_{0}\eps}{\beta^{\frac{1}{2}}}\norm{\px\ueb(t,\cdot)}^2_{L^2(\R)} + \frac{\beta^{\frac{1}{2}}\eps}{2}\norm{\pxx\ueb(t,\cdot)}^2_{L^2(\R)}.
\end{split}
\end{equation}
Thanks to \eqref{eq:AB2*}, and the Young inequality,
\begin{equation}
\label{eq:AB6}
\begin{split}
&\beta\eps\int_{\R}\vert\ueb\px\ueb\vert\vert\ptxx\ueb\vert dx= \eps\int_{\R}\left\vert\ueb\px\ueb\right\vert\left\vert\beta\ptxx\ueb\right\vert dx\\
&\qquad\le\frac{\eps}{2}\int_{\R}\ueb^2(\px\ueb)^2 dx + \frac{\beta\eps}{2}\norm{\ptxx\ueb(t,\cdot)}^2_{L^2(\R)}\\
&\qquad\le\frac{\eps}{2}\norm{\ueb(t,\cdot)}^2_{L^{\infty}(\R)}\norm{\px\ueb(t,\cdot)}^2_{L^2(\R)} + \frac{\beta\eps}{2}\norm{\ptxx\ueb(t,\cdot)}^2_{L^2(\R)}\\
&\qquad\le \frac{\eps}{2\beta^{\frac{1}{2}}}\norm{\px\ueb(t,\cdot)}^2_{L^2(\R)} + \frac{\beta\eps}{2}\norm{\ptxx\ueb(t,\cdot)}^2_{L^2(\R)}.
\end{split}
\end{equation}
It follows from \eqref{eq:AB4}, \eqref{eq:AB5}, and \eqref{eq:AB6} that
\begin{align*}
&\frac{d}{dt}\left(\beta^{\frac{1}{2}}\norm{\px\ueb(t,\cdot)}^2_{L^2(\R)}+\frac{2\beta^{\frac{3}{2}} + \beta\eps^2}{2} \norm{\pxx\ueb(t,\cdot)}^2_{L^2(\R)}\right)\\
&\qquad +\frac{3\beta^{\frac{1}{2}}\eps}{2}\norm{\pxx\ueb(t,\cdot)}^2_{L^2(\R)} +\frac{\beta^2\eps}{2}\norm{\ptxx\ueb(t,\cdot)}^2_{L^2(\R)}\\
&\qquad+\beta\eps\norm{\ptx\ueb(t,\cdot)}^2_{L^2(\R)}\le \frac{C_{0}\eps}{\beta^{\frac{1}{2}}}\norm{\px\ueb(t,\cdot)}^2_{L^2(\R)}.
\end{align*}
Hence,
\begin{align*}
&\frac{d}{dt}\left(\beta\norm{\px\ueb(t,\cdot)}^2_{L^2(\R)}+\frac{2\beta^2 + \beta^{\frac{3}{2}}\eps^2}{2} \norm{\pxx\ueb(t,\cdot)}^2_{L^2(\R)}\right)\\
&\qquad +\frac{3\beta\eps}{2}\norm{\pxx\ueb(t,\cdot)}^2_{L^2(\R)} +\frac{\beta^{\frac{5}{2}}\eps}{2}\norm{\ptxx\ueb(t,\cdot)}^2_{L^2(\R)}\\
&\qquad +\beta^{\frac{3}{2}}\eps\norm{\ptx\ueb(t,\cdot)}^2_{L^2(\R)}\le C_0\eps\norm{\px\ueb(t,\cdot)}^2_{L^2(\R)}.
\end{align*}
An integration on $(0,t)$ and \eqref{eq:AB31} give
\begin{align*}
&\beta\norm{\px\ueb(t,\cdot)}^2_{L^2(\R)}+\frac{2\beta^2 + \beta^{\frac{3}{2}}\eps^2}{2} \norm{\pxx\ueb(t,\cdot)}^2_{L^2(\R)}\\
&\qquad\quad + \frac{3\beta\eps}{2}\int_{0}^{t}\norm{\pxx\ueb(s,\cdot)}^2_{L^2(\R)} +\frac{\beta^{\frac{5}{2}}\eps}{2}\int_{0}^{t}\norm{\ptxx\ueb(s,\cdot)}^2_{L^2(\R)}ds\\
&\qquad+\beta^{\frac{3}{2}}\eps\norm{\ptx\ueb(t,\cdot)}^2_{L^2(\R)}\le C_0+ C_0\eps\int_{0}^{t}\norm{\px\ueb(s,\cdot)}^2_{L^2(\R)}ds \le C_0,
\end{align*}
that is \eqref{eq:BC3}.
\end{proof}
We continue by proving the following result
\begin{lemma}\label{lm:9000}
Assume that \eqref{eq:uo-l2}, \eqref{eq:beta-eps-2}, and \eqref{eq:AB3} hold. Then, for any compactly
supported entropy--entropy flux pair $(\eta, \,q)$, there exist two sequences $\{\eps_{n}\}_{n\in\N},\,\{\beta_{n}\}_{n\in\N}$, with $\eps_n,\,\beta_n\to0$, and a limit function
\begin{equation*}
u\in L^{\infty}(\R^{+};L^2(\R)),
\end{equation*}
such that
\eqref{eq:con-u-1} holds and
\begin{equation}
\label{eq:A40}
\textrm{$u$ is a distributional solution of \eqref{eq:BU}}.
\end{equation}
\end{lemma}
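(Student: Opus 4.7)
The plan is to mimic the $L^p$ compensated compactness strategy that was used in Lemma \ref{lm:259} for the Rosenau-KdV equation, but now specialized to the third-order BBM approximation \eqref{eq:AB2}. The only high-order terms on the right-hand side are $\eps\pxx\ueb$ and $\beta\ptxx\ueb$, so the argument is shorter than in the Rosenau-KdV setting.

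First I would fix a compactly supported entropy-entropy flux pair $(\eta,q)$ and multiply \eqref{eq:AB2} by $\eta'(\ueb)$ to obtain
\begin{equation*}
\pt\eta(\ueb)+\px q(\ueb)=\eps\eta'(\ueb)\pxx\ueb+\beta\eta'(\ueb)\ptxx\ueb.
\end{equation*}
Integrating by parts on each summand separates the right-hand side into the four pieces
\begin{equation*}
J_1=\px(\eps\eta'(\ueb)\px\ueb),\quad J_2=-\eps\eta''(\ueb)(\px\ueb)^2,\quad J_3=\px(\beta\eta'(\ueb)\ptx\ueb),\quad J_4=-\beta\eta''(\ueb)\px\ueb\ptx\ueb.
\end{equation*}
The goal is then to show $J_1,J_3\to 0$ in $H^{-1}_{\loc}((0,T)\times\R)$ and that $\{J_2\}_{\eps,\beta},\{J_4\}_{\eps,\beta}$ are bounded in $L^1_{\loc}((0,T)\times\R)$.

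For $J_1$ the estimate \eqref{eq:AB31} gives $\sqrt{\eps}\px\ueb$ bounded in $L^2((0,T)\times\R)$, so $\eps\eta'(\ueb)\px\ueb\to 0$ in $L^2$ and hence $J_1\to 0$ in $H^{-1}_{\loc}$; the same bound shows $J_2$ lies in $L^1_{\loc}$. For $J_3$ I would exploit the key new estimate \eqref{eq:BC3}, which yields $\beta^{3/4}\eps^{1/2}\ptx\ueb$ bounded in $L^2$; combined with \eqref{eq:beta-eps-2} this forces $\beta\eta'(\ueb)\ptx\ueb=\beta^{1/4}\eps^{-1/2}\cdot(\beta^{3/4}\eps^{1/2}\ptx\ueb)\cdot\eta'(\ueb)\to 0$ in $L^2$, so $J_3\to 0$ in $H^{-1}_{\loc}$. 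For $J_4$ I would bound it via Cauchy-Schwarz using $\beta^{1/2}\px\ueb$ bounded in $L^\infty_tL^2_x$ (from \eqref{eq:BC3}) and the same $\ptx\ueb$-estimate to conclude $L^1_{\loc}$ boundedness, together with the scaling \eqref{eq:beta-eps-2}.

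With these four estimates in hand, the $L^\infty$ bound \eqref{eq:AB2*} and the boundedness of $\eta,q$ give that $\pt\eta(\ueb)+\px q(\ueb)$ is bounded in $W^{-1,\infty}_{\loc}$, so Lemma \ref{lm:1} applies and $\pt\eta(\ueb)+\px q(\ueb)$ is precompact in $H^{-1}_{\loc}$. The $L^p$ compensated compactness result of \cite{SC}, combined with the $L^2$ bound on $\ueb$ from \eqref{eq:AB31}, then furnishes sequences $\eps_n,\beta_n\to 0$ and a limit $u\in L^\infty((0,T);L^2(\R))$ with $\ueb\to u$ in $L^p_{\loc}$ for $1\le p<2$, i.e.\ \eqref{eq:con-u-1}. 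Passing to the limit in the weak formulation of \eqref{eq:AB2} against a smooth compactly supported test function and using that $\eps\pxx\ueb$ and $\beta\ptxx\ueb$ vanish distributionally (again by \eqref{eq:AB31}, \eqref{eq:BC3}, and \eqref{eq:beta-eps-2}) yields \eqref{eq:A40}. The main technical hurdle is the control of $J_3$ and $J_4$: both terms carry the large factor $\ptx\ueb$ that is only bounded after multiplication by $\beta^{3/4}\eps^{1/2}$, so the scaling hypothesis $\beta=O(\eps^4)$ is used precisely to absorb this factor.
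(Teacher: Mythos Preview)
Your approach is essentially identical to the paper's: the same decomposition into four pieces (the paper calls them $I_{1,\eps,\beta},\ldots,I_{4,\eps,\beta}$), the same appeal to Murat's Lemma~\ref{lm:1}, and the $L^p$ compensated compactness of \cite{SC}. Your treatments of $J_1$, $J_2$, and $J_3$ match the paper exactly.

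There is, however, a genuine slip in your handling of $J_4$. Pairing $\beta^{1/2}\px\ueb\in L^\infty_tL^2_x$ from \eqref{eq:BC3} with $\beta^{3/4}\eps^{1/2}\ptx\ueb\in L^2_{t,x}$ gives, after Cauchy--Schwarz,
\[
\norm{J_4}_{L^1((0,T)\times\R)}\ \lesssim\ \beta\cdot\frac{1}{\beta^{1/2}}\cdot\frac{1}{\beta^{3/4}\eps^{1/2}}\ =\ \frac{1}{\beta^{1/4}\eps^{1/2}},
\]
and this is \emph{not} bounded under \eqref{eq:beta-eps-2}: already for $\beta=\eps^4$ it equals $\eps^{-3/2}$, and it gets worse as $\beta$ becomes smaller. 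The scaling hypothesis $\beta=\mathcal{O}(\eps^4)$ is only an upper bound on $\beta$, so you cannot use it to control negative powers of $\beta$. The paper instead uses the $L^2_{t,x}$ bound $\eps^{1/2}\px\ueb\in L^2((0,T)\times\R)$ coming from \eqref{eq:AB31} (the very estimate you already invoked for $J_1$ and $J_2$), which yields
\[
\norm{J_4}_{L^1((0,T)\times\R)}\ \lesssim\ \beta\cdot\frac{1}{\eps^{1/2}}\cdot\frac{1}{\beta^{3/4}\eps^{1/2}}\ =\ \frac{\beta^{1/4}}{\eps}\ \le\ C_0.
\]
With this single correction your argument is complete and coincides with the paper's.
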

\begin{proof}
Let us consider a compactly supported entropy--entropy flux pair $(\eta, q)$. Multiplying \eqref{eq:AB2} by $\eta'(\ueb)$, we have
\begin{align*}
\pt\eta(\ueb) + \px q(\ueb) =&\eps \eta'(\ueb) \pxx\ueb +\beta\eta'(\ueb)\ptxx\ueb \\
=& I_{1,\,\eps,\,\beta}+I_{2,\,\eps,\,\beta}+ I_{3,\,\eps,\,\beta} + I_{4,\,\eps,\,\beta},
\end{align*}
where
\begin{equation}
\begin{split}
\label{eq:1200013}
I_{1,\,\eps,\,\beta}&=\px(\eps\eta'(\ueb)\px\ueb),\\
I_{2,\,\eps,\,\beta}&= -\eps\eta''(\ueb)(\px\ueb)^2,\\
I_{3,\,\eps,\,\beta}&= \px(\beta\eta'(\ueb)\ptx\ueb),\\
I_{4,\,\eps,\,\beta}&= -\beta\eta''(\ueb)\px\ueb\ptx\ueb.
\end{split}
\end{equation}
Fix $T>0$. Arguing in \cite[Lemma $3.2$]{Cd2}, we have that $I_{1,\,\eps,\,\beta}\to0$ in $H^{-1}((0,T) \times\R)$, and $\{I_{2,\,\eps,\,\beta}\}_{\eps,\beta >0}$ is bounded in $L^1((0,T)\times\R)$.\\
We claim that
\begin{equation*}
I_{3,\,\eps,\,\beta}\to0 \quad \text{in $H^{-1}((0,T) \times\R),\,T>0,$ as $\eps\to 0$.}
\end{equation*}
By \eqref{eq:beta-eps-2} and \eqref{eq:BC3},
\begin{align*}
&\norm{ \beta\eta'(\ueb)\ptx\ueb}^2_{L^2((0,T)\times\R)}\\
&\qquad\le \beta^2 \norm{\eta'}_{L^{\infty}(\R)}\norm{\ptx\ueb}^2_{L^2((0,T)\times\R)}\\
&\qquad= \norm{\eta'}_{L^{\infty}(\R)}\frac{\beta^2\eps}{\eps}\norm{\ptx\ueb}^2_{L^2((0,T)\times\R)}\\
&\qquad=\norm{\eta'}_{L^{\infty}(\R)}\frac{\beta^{\frac{1}{2}}\beta^{\frac{3}{2}}\eps}{\eps}\norm{\ptx\ueb}^2_{L^2((0,T)\times\R)}
\le C_{0}\norm{\eta'}_{L^{\infty}(\R)}\eps\to 0.
\end{align*}
Let us show that
\begin{equation*}
I_{4,\,\eps,\,\beta}\quad \text{is bounded in $L^1((0,T) \times\R),\,T>0,$.}
\end{equation*}
Thanks to \eqref{eq:beta-eps-2}, \eqref{eq:AB31}, \eqref{eq:BC3}, and the H\"older inequality,
\begin{align*}
&\norm{\beta\eta''(\ueb)\px\ueb\ptx\ueb}_{L^1((0,T)\times\R)}\\
&\qquad\le\beta\norm{\eta''}_{L^{\infty}(\R)}\int_{0}^{T}\!\!\!\int_{\R}\vert\px\ueb\ptx\ueb\vert dsdx\\
&\qquad=\norm{\eta''}_{L^{\infty}(\R)}\frac{\beta^{\frac{1}{4}}\beta^{\frac{3}{4}}\eps}{\eps}\norm{\px\ueb}_{L^2((0,T)\times\R)}\norm{\ptx\ueb}_{L^2((0,T)\times\R)}\\
&\qquad\le C_{0}\norm{\eta''}_{L^{\infty}(\R)}\frac{\beta^{\frac{1}{4}}}{\eps}\le C_{0}\norm{\eta''}_{L^{\infty}(\R)}.
\end{align*}
Arguing as in \cite{SC}, we have \eqref{eq:A40}.
\end{proof}
\begin{lemma}\label{eq:10034}
Assume \eqref{eq:uo-l2}, \eqref{eq:beta-eps-4},  and \eqref{eq:AB3} hold. Then, for any compactly
supported entropy--entropy flux pair $(\eta, \,q)$, there exist two sequences $\{\eps_{n}\}_{n\in\N},\,\{\beta_{n}\}_{n\in\N}$, with $\eps_n,\,\beta_n\to0$, and a limit function
\begin{equation*}
u\in L^{\infty}(\R^{+};L^2(\R)),
\end{equation*}
such that
\eqref{eq:con-u-1} and \eqref{eq:u-entro-sol-12} hold.
\end{lemma}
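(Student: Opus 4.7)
The plan is to mirror the structure of Lemma \ref{lm:9000} and upgrade one of the decomposition estimates under the sharper assumption \eqref{eq:beta-eps-4}. I multiply \eqref{eq:AB2} by $\eta'(\ueb)$ for a compactly supported entropy--entropy flux pair $(\eta,q)$, obtaining
\begin{equation*}
\pt\eta(\ueb) + \px q(\ueb) = I_{1,\eps,\beta} + I_{2,\eps,\beta} + I_{3,\eps,\beta} + I_{4,\eps,\beta},
\end{equation*}
with the $I_{j,\eps,\beta}$ as in \eqref{eq:1200013}. From the proof of Lemma \ref{lm:9000} I already have $I_{1,\eps,\beta}, I_{3,\eps,\beta}\to 0$ in $H^{-1}((0,T)\times\R)$ and $\{I_{2,\eps,\beta}\}_{\eps,\beta>0}$ bounded in $L^1((0,T)\times\R)$. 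These estimates use only the ingredients of Lemma \ref{lm:N23} and \eqref{eq:BC3}, and carry over unchanged to the present setting.

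The only place where the sharper assumption is felt is the control of $I_{4,\eps,\beta}$. Repeating the H\"older estimate from Lemma \ref{lm:9000} together with \eqref{eq:AB31} and \eqref{eq:BC3} gives
\begin{equation*}
\norm{\beta\eta''(\ueb)\px\ueb\ptx\ueb}_{L^1((0,T)\times\R)} \le C_0\norm{\eta''}_{L^\infty(\R)}\frac{\beta^{1/4}}{\eps}.
\end{equation*}
Under \eqref{eq:beta-eps-4} the ratio $\beta^{1/4}/\eps$ tends to zero, so $I_{4,\eps,\beta}\to 0$ in $L^1((0,T)\times\R)$ rather than merely being bounded there.

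With this upgrade in hand, I would apply the Murat lemma (Lemma \ref{lm:1}) to the decomposition together with the $L^2$ bound \eqref{eq:AB31}, concluding that $\pt\eta(\ueb)+\px q(\ueb)$ lies in a compact subset of $H^{-1}_{\mathrm{loc}}((0,T)\times\R)$. The $L^p$ compensated compactness machinery of \cite{SC} then produces sequences $\eps_n,\beta_n\to 0$ and a limit $u\in L^\infty(\R^+;L^2(\R))$ realising \eqref{eq:con-u-1}. To obtain \eqref{eq:u-entro-sol-12} I pass to the limit in the entropy identity: for convex $\eta$, $I_{2,\eps,\beta}=-\eps\eta''(\ueb)(\px\ueb)^2\le 0$ distributionally, while $I_{1,\eps,\beta}, I_{3,\eps,\beta}$ vanish in $H^{-1}$ and $I_{4,\eps,\beta}$ vanishes in $L^1$; hence $\pt\eta(u)+\px q(u)\le 0$ in $\D'$ for every convex $\eta\in C^2$. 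Uniqueness of the Kru\v{z}kov entropy solution of \eqref{eq:BU} then yields \eqref{eq:u-entro-sol-12}.

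The whole difficulty is concentrated in the scaling $\beta^{1/4}/\eps$ appearing in the $I_4$ estimate, which is exactly the threshold addressed by \eqref{eq:beta-eps-4}. All remaining ingredients are either direct quotations from Lemma \ref{lm:9000} or standard consequences of the compensated compactness framework, so no new a priori bounds beyond Lemma \ref{lm:N23} and \eqref{eq:BC3} are required; the argument is a bookkeeping refinement of the distributional result, entirely parallel to the transition from Lemma \ref{lm:259} to Lemma \ref{lm:452} carried out earlier in the paper.
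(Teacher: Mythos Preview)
Your proposal is correct and follows essentially the same route as the paper: the paper's proof also reduces to the decomposition \eqref{eq:1200013}, reuses the $I_{1},I_{2},I_{3}$ estimates from the distributional lemma, upgrades $I_{4}$ from bounded to vanishing in $L^{1}$ under \eqref{eq:beta-eps-4}, and then invokes \cite{LN} for the entropy-solution conclusion. Your write-up is in fact more explicit than the paper's, which merely cites Lemma~\ref{lm:259} and \cite{LN} without spelling out the $\beta^{1/4}/\eps$ scaling or the passage to the limit in the entropy inequality.
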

\begin{proof}
Let us consider a compactly supported entropy--entropy flux pair $(\eta, q)$. Multiplying \eqref{eq:AB2} by $\eta'(\ueb)$, we have
\begin{align*}
\pt\eta(\ueb) + \px q(\ueb) =&\eps \eta'(\ueb) \pxx\ueb +\beta\eta'(\ueb)\ptxx\ueb  \\
=& I_{1,\,\eps,\,\beta}+I_{2,\,\eps,\,\beta}+ I_{3,\,\eps,\,\beta} + I_{4,\,\eps,\,\beta},
\end{align*}
where $I_{1,\,\eps,\,\beta},\,I_{2,\,\eps,\,\beta},\, I_{3,\,\eps,\,\beta},\, I_{4,\,\eps,\,\beta}$ are defined in \eqref{eq:1200013}.

As in Lemma \ref{lm:259}, we have that $I_{1,\,\eps,\,\beta},\,I_{3,\,\eps,\,\beta}   \to 0$ in $H^{-1}((0,T)\times\R)$, $\{ I_{2,\,\eps,\,\beta}\}_{\eps,\beta>0}$ is bounded in $L^1((0,T)\times\R)$, while $I_{4,\,\eps,\,\beta}\to0$ in $L^1((0,T)\times\R)$.

Arguing as in \cite{LN}, we have \eqref{eq:u-entro-sol-12}.
\end{proof}

\begin{proof}[Proof of Theorem \ref{th:main-A2}]
Theorem \ref{th:main-A2} follows from Lemmas \ref{lm:9000} and  \ref{eq:10034}.
\end{proof}

\end{document}